\newcommand{\grad}{\triangledown}
\newcommand{\xo}{x_{0}}
\newcommand{\yo}{y_{0}}
\newcommand{\A}{\alpha}
\newcommand{\B}{\beta}
\newcommand{\laplask}{\mathcal{I}_{\delta}}
\newcommand{\laplasK}{\mathcal{I}^{\delta}}
\newcommand{\laplas}{(-\bigtriangleup)^{1/2}}
\newcommand{\Rn}{\mathbb{R}^n}
\newcommand{\SA}{\mathcal{A}}
\newcommand{\SB}{\mathcal{B}}
\newenvironment{Assumptions}
{%
\setcounter{enumi}{0}

\begin{enumerate}}%
{\end{enumerate} }
\newcommand{\eps}{\ensuremath{\epsilon}}
\newcommand{\R}{\ensuremath{\mathbb{R}}}
\theoremstyle{plain}
\newtheorem{theorem}{Theorem}[section]
\newtheorem{cor}{Corollary}
\newtheorem{lem}[theorem]{Lemma}
\newtheorem{prop}[theorem]{Proposition}
\theoremstyle{definition}
\newtheorem{defi}[theorem]{Definition}
\newtheorem{rem}{Remark}
\title[Non-local Isaacs equation] 
      {On the differentiability of the solutions of  non-local Isaacs equations involving $\frac 12$-Laplacian.}
\author[Imran H. Biswas and Indranil Chowdhury]{}
\subjclass{35R11, 35F21, 45K05, 49L20, 49L25, 91A23}
 \keywords{viscosity solution, differentiability,  Bellman-Isaacs equations,
 integro-partial differential equation, fractional Laplacian.}
 \email{imran@math.tifrbng.res.in}
 \email{indranil@math.tifrbng.res.in}
\thanks{$^*$ Corresponding author: Imran H. Biswas}
\begin{document}
\maketitle


\centerline{\scshape Imran H. Biswas$^*$ and Indranil Chowdhury}
\medskip
{\footnotesize
 \centerline{Centre for Applicable Mathematics, Tata Institute of Fundamental Research}
   \centerline{ P.O.\ Box 6503, GKVK Post Office}
   \centerline{Bangalore 560065, India}
}

\bigskip

 \centerline{(Communicated by the associate editor name)}

\begin{abstract}
We derive $C^{1,\sigma}$-estimate for the solutions of a class of non-local elliptic Bellman-Isaacs equations. These equations are fully nonlinear and are associated with infinite horizon stochastic differential game problems involving jump-diffusions.  The non-locality  is represented by the presence of fractional order diffusion term  and we deal with the particular case of $\frac 12$-Laplacian, where the order $\frac 12$ is known as the critical order in this context.  More importantly, these equations are not translation invariant  and we prove that the viscosity solutions of such equations are $C^{1,\sigma}$, making the equations classically solvable.
\end{abstract}
\section{Introduction}
In this article we investigate regularity properties of a class of fully nonlinear nonlocal elliptic equations of Isaacs type. This class has its origin in the stochastic differential game problems where the state processes are governed by controlled drift-jump processes. The solutions to such equations are interpreted in the viscosity sense and are a priori known to be only continuous. The question of differentiability is a subtle one and we want to establish that the viscosity solutions actually become differentiable and  the derivatives are regular enough to make the equations classically solvable. The nonlocal equations that we are interested in have the form
 
 \begin{align}
   \label{eq:HJB-master} F\big (x, u(x),\nabla u(x), (-\Delta)^{\frac 12} u(x)\big) = 0, \quad x\in \R^n.
 \end{align}  The nonlinearity $F$ is a function from $\R^n\times \R\times \R^n\times \R$ to $\R$ and defined as follows:
 
 \begin{align}
    \label{eq:nonlinearity} F\big (x, r, p, t\big)= \sup_{\alpha\in \mathcal{A}} \inf_{\beta\in\mathcal{B}} \Big\{f^{\alpha, \beta}(x) + c^{\alpha,\beta}(x)r+b^{\alpha,\beta}(x)\cdot p +a^{\alpha,\beta}(x) t\Big\},
 \end{align} where $\mathcal{A}$ and $\mathcal{B}$ are two metric spaces and the functions $f^{\alpha, \beta}(x), c^{\alpha,\beta}(x)$ and $  a^{\alpha,\beta}(x)$ are real valued and $ b^{\alpha,\beta}(x)$ are $\R^n$-valued. The precise set of assumptions on these functions will be listed in the next section, but roughly speaking we will be working in a framework where Eq.\eqref{eq:HJB-master} is well-posed in the  viscosity sense and has Lipschitz continuous solution.
 
   Note that for any $s\in (0, 1)$, the operator $(-\Delta)^s$ has the form (see \cite{Landkof:1966} for example)
   \begin{align}\label{fraclap}
 (-\Delta )^s  u(x) = c(n, s)\int_{\R^n} \frac{u(x)-u(y)}{|x-y|^{n+2s}} \ dy.
\end{align} The constant $c(n,s)$ depends only on $n$ and $s$, and the integral has to be understood in the principal value sense. Besides non-linearity, clearly the representation \eqref{fraclap} makes the problem \eqref{eq:HJB-master} non-local. The presence of such non-locality goes beyond the pure academic curiosity and is motivated by concrete applications.  As mentioned, the application area that we have in mind is stochastic differential games where the state process is governed by  a pure jump type noise and two players are involved in minimizing or maximizing a pay-off over an infinite time horizon. The value function of such a game problem satisfies a dynamic programming principle, which is then used to show that the value function is  the viscosity solution of the underlying Bellman-Isaacs equation. The presence of jumps in the state dynamics results in non-locality of the underlying Bellman-Isaacs equations, and they assume the the form  \eqref{eq:HJB-master}-\eqref{eq:nonlinearity}. We refer to \cite{Biswas2012} for more on stochastic differential games involving jump-diffusions. 

 The question of solvability in the classical sense for such equations is a subtle one and may not always have an affirmative answer. This prompts one to look for an appropriate concept of weak solutions for such problems and the notion of viscosity solutions makes perfect sense here. The viscosity solutions are by definition only continuous and, depending on the nature of the problem, it is often fairly simple to establish H\"{o}lder or Lipschitz regularity for such solutions. However, in order for classical solvability, one needs to establish differentiability of viscosity solutions and, in addition, prove that the derivative is H\"{o}lder continuous. 
 
      Since the pioneering development of viscosity solution theory for fully non-linear PDEs, the same has been extended to the equations that have non-localities in them.
The articles \cite{BCI:P07, BI:P07, Dro2006,Imbert: 2005ft, Sayah:2010gt, Jakobsen:2005jy, silvestre:2010, silvestre:2012} and the references therein provide a rich source for results on the viscosity solution theory for non-local equations. The issues addressed in these articles range from standard existence,  uniqueness and stability theory to certain types of  regularity questions. The question of differentiability or $C^{1,\sigma}$-type regularity for viscosity solutions of nonlocal equations is a relatively newer area of pursuit and most of the developments have taken place in the last decade alone. In regard to this frontier, the work of Caffarelli et al. have been milestones of sorts and we would like to particularly mention \cite{cafarelli:2009, cafarelli:2010, cafarelli:2011, cafarelli:2011.2} and \cite{davil2012} for developments in this area.  These studies are largely restricted to the Dirichlet problem
   \begin{align}
\label{eq:sub-critical case} \sup_{\alpha\in \mathcal{A}} \inf_{\beta\in\mathcal{B}} \Big\{f^{\alpha, \beta}(x) + \int_{\R_y^n} \big(u(x)-u(x+y)\big) K^{\alpha, \beta}(x,y)\,dy\Big\} & = 0~\text{in}~\Omega,\\
        \notag u &= g \quad \text{in}\quad \R^n\backslash \Omega,
 \end{align} where $\Omega$ is an open subset in $\R^n$ and $K^{\alpha, \beta}(x,y)$ is a suitable kernel satisfying the condition  $\frac{\lambda(1-s)}{|y|^{n+2s}}\le K^{\alpha, \beta}(x,y)  \le \frac{\Lambda (1-s)}{|y|^{n+2s}}$ for all $x$. Here  $s$ is a given constant from $(0,1)$ and  $\lambda, \Lambda$ are given positive constants. At an intuitive level, \eqref{eq:sub-critical case}  translates roughly into an equation of the form 
  \begin{align}
   \label{eq:HJB-secondary} F\big (x, (-\Delta)^{s} u(x)\big) = 0, \quad x\in \Omega.
 \end{align} In the case where $F$ is translation invariant, i.e  $F$ is independent of $x$, the regularity issue is comprehensively settled by Caffarelli $\&$ Silvestre \cite{cafarelli:2009}. The problem is more subtle for the non-translation invariant case and Krylov-Safanov type  regularity results are made available in \cite{cafarelli:2009} only if $s > \frac 12$. This, however, does not address the issue if the Hamiltonian has additional dependence on the gradient as well. We are interested in Lipschitz continuous viscosity solutions and, for $s < \frac 12$, such solutions would qualify as classical solutions of \eqref{eq:HJB-secondary}. The case of $s =\frac 12 $ is the borderline case. The Lipschitz continuity of viscosity solutions is not enough to make it a classical solution in this case and it is rightly termed as the critical case. Moreover,  the gradient term in \eqref{eq:HJB-master} balances out the $\frac 12$-Laplacian term at every scale and it is a priori not clear if the non-local term will have additional regularizing effect.  However, in the context of parabolic (time-dependent) HJB equations with fractional Laplacian, one has classical parabolic regularization if $s > \frac 12$
  (see \cite{Imbert: 2005ft}), $C^{1,\sigma}$ regularization if $s=\frac 12$  (see \cite{Biswas2013, silvestre:2010, silvestre:2012} ),  and no regularization if $s< \frac 12$ (see \cite{Kiselev:2008} ). The results in  \cite{ silvestre:2010} are available only for the translation invariant problem and some relevant (but marginal) extensions are available in \cite{Biswas2013, silvestre:2012}. The case of a general non-translation invariant problem is largely open.

   It is fairly simple to see that, if \eqref{eq:HJB-master} is translation invariant, the viscosity solution of  \eqref{eq:HJB-master} is actually a constant and therefore the question of regularity is redundant. The interesting case therefore is when the problem   \eqref{eq:HJB-master} is not translation invariant. In this case, under appropriate conditions, it is also fairly simple to establish Lipschitz continuity for viscosity solutions of \eqref{eq:HJB-master}, and the question of differentiability for such solutions comes as a natural one. In this article, we draw inspirations from the recent regularity results from \cite{silvestre:2012, silvestre:2010} and show that the presence of critical order fractional diffusion in \eqref{eq:HJB-master} makes the unique Lipschitz continuous viscosity solution differentiable and the derivative is H\"{o}lder continuous. In other words, the problem becomes classically solvable.
   
   The rest of this paper is organized as follows. We state the assumptions, 
detail the technical framework and state the main results in Section \ref{technical}.    
Section \ref{oscillation} constitutes the main technical part and we establish a diminishing of oscillation lemma here .  The Section \ref{holder-regularity} is the final one and $C^{0,\sigma}$-regularity for the derivative is established here. The paper is concluded with an appendix where we prove the existence and Lipschitz continuity of viscosity solution of \eqref{eq:HJB-master}.

 \section{Preliminaries, framework and main results}\label{technical}
 We begin with the description of notation that will be frequently used throughout the paper. The Euclidean norm in any $\R^d$ type space is denoted by $|\cdot|$. We use the letters $C, K, N$ etc to denote various generic constants depending on the data. We use the notation $B_r(x)$ to denote an open  ball in $\R^n$ of 
radius $r$ around  $x$; and we write $B_r$ for $B_r(0)$. Also, we use the notation $Q_r$ to denote the space time cylinder $[-r, 0]\times B_r$. For any subset $U\subset \R^n $, the space of all bounded and H\"{o}lder continuous function with exponent $\sigma$ is denoted by $C^{0,\sigma}(U)$ and is equipped with the norm 
  $$ \qquad |u|_{C^{0,\sigma}(U)} = |u|_{0} 
+ \displaystyle\sup_{x,y\in U} \dfrac{|u(x)-u(y)|}{|x-y|^{\sigma}}~\text{where}~|u|_0 = \displaystyle\sup_{x\in U} |u(x)|.$$  The space of bounded functions with H\"{o}lder continuous derivatives of exponent $\sigma$ is denoted by $C^{1,\sigma}(U)$ and equipped with norm
$$ ||u||_{C^{1,\sigma}(U)} = |u|_{0}+ |\nabla u|_{C^{0,\sigma}}. $$ We use $USC(U)$, $LSC(U)$ and $C(U)$ to denote the space of upper semicontinuous, lower semicontinuous and continuous
functions on $U$ respectively. A lower index $p$ denotes the polynomial growth at infinity, so $C_p(U) ,USC_p(U),
 LSC_p(U) $ consist of functions $u$ from $C(U) ,USC(U), LSC(U)$ satisfying the growth condition 
$$ |u(x)| \leq C(1+ |x|^{p}) \quad \mbox{for all} \quad x\in U. $$
Also denote the bounded continuous, upper semicontinuous and lower semicontinuous functions by $C_b(U), USC_b(U)$ and 
 $ LSC_b(U) $, respectively.  We now list the assumptions under which \eqref{eq:HJB-master} is wellposed in the viscosity sense and has Lipschitz continuous viscosity solutions.  

\begin{Assumptions}
\item\label{A1} The spaces $\mathcal{A}$ and $\mathcal{B}$ are two compact metric spaces; the functions $a^{\alpha, \beta}(x)$, $ c^{\alpha, \beta}(x), f^{\alpha, \beta}(x)$ are real valued and $b^{\alpha, \beta}(x)$ are $\R^n$-valued continuous functions with respect to the variables $\alpha, \beta$ and $x$.
\vspace{.2cm}
\item\label{A2} The functions $a^{\alpha, \beta}(x)$, $b^{\alpha, \beta}(x), c^{\alpha, \beta}(x), f^{\alpha, \beta}(x)$ are bounded and Lipschitz continuous in $x$ and there is a constant $K$ such that 
   $$\Big(|a^{\alpha, \beta}(\cdot)|_{C^{0,1}(\R^n)}+|b^{\alpha, \beta}(\cdot)|_{C^{0,1}(\R^n)}+|c^{\alpha, \beta}(\cdot)|_{C^{0,1}(\R^n)}+|f^{\alpha, \beta}(\cdot)|_{C^{0,1}(\R^n)}\Big) \le K$$
   for all $(\alpha, \beta)\in \mathcal{A}\times \mathcal{B}$.
   \vspace{.2cm}
   \item\label{A3} There is a positive constant $\lambda$ such that $ c^{\alpha, \beta}(x) > \lambda$ for all $(\alpha, \beta)$ and $x$.
   \vspace{.2cm}
     \item\label{A4} There is a positive constant $\lambda_1$ such that $ a^{\alpha, \beta}(x) > \lambda_1$ for all $(\alpha, \beta)$ and $x$.
     
   \end{Assumptions}
   
   \begin{rem}
 The assumptions \ref{A1}-\ref{A4} are natural and standard in view of the optimal control/game problems for jump-diffusions. The condition \ref{A3} is essential to prove comparison principle for \eqref{eq:HJB-master} and would ensure well-posedness in the viscosity sense. In a general scheme of work \ref{A3} ensures some H\"{o}lder type regularity for viscosity solutions. However, we need a priori Lipschitz continuity  and this is ensured by assuming that the constant $\lambda$ in \ref{A3} is sufficiently large.
 \end{rem}
 
 \subsection{Viscosity solution framework.} There are many ways to formulate the notion of viscosity solutions for nonlocal problems, all them however lead to the same solution under standard assumptions. We follow the formulation from \cite{Jakobsen:2005jy} to define the viscosity solutions of \eqref{eq:HJB-master}, and this involves the following quantities.  For $\kappa\in (0,1)$, let
     \begin{align*}
      \mathcal{I}_\kappa(\varphi)(x) &= -c(n, 1/2) \int_{B(0,\kappa)}\frac{\big(\varphi(x+z)-\varphi(x)\big)}{|z|^{n+1}} dz,\\
        \mathcal{I}^{\kappa}(u)(x) &= -c(n,1/2) \int_{B(0,\kappa)^{c}}\frac{\big(u(x+z)-u(x)\big)}{|z|^{n+1}} dz,
     \end{align*} and by the representation \eqref{fraclap}, it holds that 
     
       \begin{align*}
(-\Delta)^{\frac{1}{2}}\varphi =    \mathcal{I}_\kappa(\varphi)+   \mathcal{I}^{\kappa}(\varphi)
    \end{align*} for every  $\kappa \in (0,1)$. We now define the notion of viscosity solution for \eqref{eq:HJB-master}.

         \begin{defi}[viscosity solution]\label{defi:viscosolution}
      \begin{itemize}
        \item[$i.)$] A function $u\in USC_b(\mathbb{R}^n)$ is a viscosity subsolution of  \eqref{eq:HJB-master} if for any $\varphi\in C^{2}(\mathbb{R}^n)$, whenever $x \in \mathbb{R}^n$ is a global maximum point of $u-\varphi$ it holds that
        \begin{align*}
     &  \sup_{\alpha\in \mathcal{A}} \inf_{\beta\in\mathcal{B}} \Big\{f^{\alpha, \beta}(x) + c^{\alpha,\beta}(x)u(x)+b^{\alpha,\beta}(x)\cdot\nabla \varphi(x) \\&\hspace{5cm}+a^{\alpha,\beta}(x)   \mathcal{I}_\kappa(\varphi)(x)+a^{\alpha,\beta}(x)   \mathcal{I}^\kappa(u)(x)\Big\} \le 0
        \end{align*}  for all $\kappa\in (0,1)$.
          \item[$ii.)$] A function $u\in LSC_b(\mathbb{R}^n)$ is a viscosity supersolution of  \eqref{eq:HJB-master} if for any $\varphi\in C^{2}( \mathbb{R}^n)$, whenever $x \in\mathbb{R}^n$ is a global minimum point of $u-\varphi$, it holds that
        \begin{align*}
        & \sup_{\alpha\in \mathcal{A}} \inf_{\beta\in\mathcal{B}} \Big\{f^{\alpha, \beta}(x) + c^{\alpha,\beta}(x)u(x)+b^{\alpha,\beta}(x)\cdot\nabla \varphi(x) \\&\hspace{5cm}+a^{\alpha,\beta}(x)   \mathcal{I}_\kappa(\varphi)(x)+a^{\alpha,\beta}(x)   \mathcal{I}^\kappa(u)(x)\Big\}  \ge 0
        \end{align*}
        for all $\kappa\in (0,1)$.
           \item[$iii.)$] A function $u\in C_b( \mathbb{R}^n)$ is a viscosity solution of \eqref{eq:HJB-master} if it is both a sub and supersolution.
     \end{itemize}
    \end{defi}
\begin{rem}
    In the above definition, it is easy to see that one can replace ``{\em global minimum/global maximum}" by  ``{\em strict global minimum/global maximum}" , and the resulting definition will still be equivalent to Definition \ref{defi:viscosolution}. Moreover, at the points of maximum or minimum of $u-\varphi$, it would be harmless to assume $u = \varphi$.  
\end{rem}
\begin{rem}
It is to be noted here that the condition $\kappa\in (0,1)$ could be replaced by $\kappa \in (0,\gamma)$ for any positive constant $\gamma$ in the Definition \ref{defi:viscosolution}. For our methodology to work, we need  subsolution/supersolution inequalities for a sequence of $\kappa$'s converging to $0$. 
\end{rem}

 We find it necessary to include the following alternative (but equivalent) definition of viscosity solution which is better suited for existence theory via Perron's method.
 
   \begin{defi}[alternative definition]\label{defi:viscosolution-alt}
     A function $u\in USC_b(\mathbb{R}^n)$ ( $u\in LSC_b(\mathbb{R}^n)$) is a viscosity subsolution ({\em supersolution}) of  \eqref{eq:HJB-master} if for any $\varphi\in C^{2}_b(\mathbb{R}^n)$, whenever $x \in \mathbb{R}^n$ is a global maximum ({\em minimum}) point of $u-\varphi$ it holds that
        \begin{align*}
       &\sup_{\alpha\in \mathcal{A}} \inf_{\beta\in\mathcal{B}} \Big\{f^{\alpha, \beta}(x) + c^{\alpha,\beta}(x)u(x)+b^{\alpha,\beta}(x)\cdot\nabla \varphi(x) \\&\hspace{7cm}+a^{\alpha,\beta}(x)  (-\Delta)^{\frac 12}(\varphi)(x)\Big\} \le 0~ (\ge 0).
        \end{align*}
        
            \end{defi}
            
            It now follows by the method of doubling of variables, standard in the context of viscosity solution theory, that subsolutions are always dominated by supersolutions and the following comparison principle holds.  
            
            \begin{lem}[Comparison principle]
            \label{comparison}
 Let  \ref{A1} - \ref{A4} be true, and $u\in USC_b(\R^n)$ be a viscosity subsolution and 
$v\in LSC_b(\R^n)$ be a viscosity supersolution of \eqref{eq:HJB-master}; then 
$$ u\leq v \quad \mbox{in} \quad \R^n.$$
                
            \end{lem}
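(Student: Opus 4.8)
The plan is to prove the comparison principle by the classical doubling-of-variables technique, adapted to the nonlocal setting using the split representation $(-\Delta)^{\frac12} = \mathcal{I}_\kappa + \mathcal{I}^\kappa$ from Definition \ref{defi:viscosolution}. Suppose, for contradiction, that $M := \sup_{\R^n}(u-v) > 0$. Since $u,v$ are bounded, for $\eta>0$ small we consider the penalized functional
\begin{align*}
\Phi_{\varepsilon,\eta}(x,y) = u(x) - v(y) - \frac{|x-y|^2}{2\varepsilon} - \eta\big(\langle x\rangle + \langle y\rangle\big),
\end{align*}
where $\langle x\rangle = (1+|x|^2)^{1/2}$ is a smooth, bounded-gradient function growing at infinity; the $\eta$-term localizes the maximum. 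The penalization guarantees a global maximum point $(\xbar,\ybar) = (\xbar_{\varepsilon,\eta},\ybar_{\varepsilon,\eta})$ of $\Phi_{\varepsilon,\eta}$, and by standard arguments (for $\eta$ fixed, $\varepsilon\to 0$) one gets $|\xbar-\ybar|^2/\varepsilon \to 0$ and $u(\xbar)-v(\ybar) \to \sup_{\R^n}(u-v-2\eta\langle\cdot\rangle)$, which is positive and close to $M$ once $\eta$ is small enough.

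Next I would feed the test functions into the sub/supersolution inequalities. Set $\varphi(x) = \frac{|x-\ybar|^2}{2\varepsilon} + \eta\langle x\rangle + \eta\langle\ybar\rangle$; then $x\mapsto u(x)-\varphi(x)$ has a global maximum at $\xbar$, so the subsolution inequality applies at $\xbar$ with this $\varphi$ for every $\kappa\in(0,1)$. Symmetrically, with $\psi(y) = -\frac{|\xbar-y|^2}{2\varepsilon} - \eta\langle y\rangle - \eta\langle\xbar\rangle$, the map $y\mapsto v(y)-\psi(y)$ has a global minimum at $\ybar$, so the supersolution inequality applies at $\ybar$ with $\psi$. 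Subtracting the two inequalities, and using that both are of the form $\sup_\alpha\inf_\beta\{\cdots\}$, one estimates the difference of the two sup-inf expressions by a common $(\alpha,\beta)$ bound: the terms $f^{\alpha,\beta}$ drop by Lipschitz continuity (\ref{A2}) giving $O(|\xbar-\ybar|)$; the $b^{\alpha,\beta}\cdot\nabla$ terms give $|b^{\alpha,\beta}(\xbar)\cdot\nabla\varphi(\xbar) - b^{\alpha,\beta}(\ybar)\cdot\nabla\psi(\ybar)|$, where $\nabla\varphi(\xbar) = \frac{\xbar-\ybar}{\varepsilon} + \eta\nabla\langle\xbar\rangle$ and $\nabla\psi(\ybar) = \frac{\xbar-\ybar}{\varepsilon} - \eta\nabla\langle\ybar\rangle$, so the leading $\frac{\xbar-\ybar}{\varepsilon}$ pieces combine into $|b^{\alpha,\beta}(\xbar)-b^{\alpha,\beta}(\ybar)|\cdot\frac{|\xbar-\ybar|}{\varepsilon} = O(|\xbar-\ybar|^2/\varepsilon)\to 0$, plus an $O(\eta)$ error; the $c^{\alpha,\beta}$ terms give $c^{\alpha,\beta}(\xbar)u(\xbar) - c^{\alpha,\beta}(\ybar)v(\ybar) \geq \lambda\big(u(\xbar)-v(\ybar)\big) - K|\xbar-\ybar|\,(|u|_0+|v|_0)$ using \ref{A3} and \ref{A2}; and finally the nonlocal terms, which are the crux.

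The main obstacle is controlling the nonlocal contribution $a^{\alpha,\beta}(\xbar)\big(\mathcal{I}_\kappa(\varphi)(\xbar) + \mathcal{I}^\kappa(u)(\xbar)\big) - a^{\alpha,\beta}(\ybar)\big(\mathcal{I}_\kappa(\psi)(\ybar) + \mathcal{I}^\kappa(v)(\ybar)\big)$. For the singular near-field pieces $\mathcal{I}_\kappa$, a direct computation using the explicit test functions shows $\mathcal{I}_\kappa(\varphi)(\xbar) \leq \frac{C\kappa}{\varepsilon} + C\eta\kappa$ and $\mathcal{I}_\kappa(\psi)(\ybar) \geq -\frac{C\kappa}{\varepsilon} - C\eta\kappa$ (the $|x-\ybar|^2$ part of $\varphi$ integrates against $|z|^{-(n+1)}$ over $B_\kappa$ to give $O(\kappa/\varepsilon)$, and $\langle\cdot\rangle$ contributes $O(\eta\kappa)$), so the difference of near-field terms is $O(\kappa/\varepsilon)$ uniformly in $\alpha,\beta$ — this is sent to zero by choosing $\kappa$ small \emph{after} $\varepsilon$ is fixed, which is precisely why the definition quantifies over all $\kappa\in(0,1)$. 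For the far-field pieces, one uses the key maximum-point inequality: since $(\xbar,\ybar)$ maximizes $\Phi_{\varepsilon,\eta}$, for every $z$ one has $u(\xbar+z)-u(\xbar) - \big(v(\ybar+z)-v(\ybar)\big) \leq \eta\big(\langle\xbar+z\rangle - \langle\xbar\rangle + \langle\ybar+z\rangle - \langle\ybar\rangle\big) \leq 2\eta|z|$, which after multiplying by $-c(n,1/2)|z|^{-(n+1)}$ and integrating over $B_\kappa^c$ gives $\mathcal{I}^\kappa(u)(\xbar) - \mathcal{I}^\kappa(v)(\ybar) \leq C\eta/\kappa$ (the integral $\int_{B_\kappa^c} |z|^{-n}dz$ diverges, so one splits $B_\kappa^c$ into $B_1\setminus B_\kappa$ and $B_1^c$; on the bounded annulus one gets $C\eta\log(1/\kappa)$ and on $B_1^c$ one uses boundedness of $u,v$ directly for an $O(1)\eta$ bound — here some care with the $\kappa$-dependence versus $\eta$ is needed, but the product $\eta/\kappa$ or $\eta\log(1/\kappa)$ still tends to $0$ if $\eta\to 0$ last). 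Assembling all estimates, the subtracted inequality yields $\lambda\big(u(\xbar)-v(\ybar)\big) \leq o(1)$ as $\varepsilon,\kappa,\eta\to 0$ in the appropriate order ($\eta$ fixed, then $\varepsilon\to 0$, then $\kappa\to 0$, then $\eta\to 0$), contradicting $u(\xbar)-v(\ybar)\to M>0$ once $\lambda M > 0$. Hence $M\leq 0$, i.e.\ $u\leq v$ in $\R^n$. I expect the delicate bookkeeping of the order of limits — ensuring the near-field $O(\kappa/\varepsilon)$ and far-field $\eta$-dependent terms are each killed at the right stage — to be the technically fussy part, though it is entirely standard once set up correctly.
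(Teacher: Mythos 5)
Your overall strategy --- doubling of variables with the split $(-\Delta)^{1/2}=\mathcal{I}_\kappa+\mathcal{I}^\kappa$, near-field handled through the test function and far-field through the maximum-point inequality --- is the same route the paper indicates (it points to the proof of Theorem \ref{thm:lipschitz} for the ingredients). However, the far-field estimate as you have written it has a genuine gap that would make the argument break down.

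Your claim $\mathcal{I}^\kappa(u)(\xbar)-\mathcal{I}^\kappa(v)(\ybar)\le C\eta/\kappa$ does not tend to zero in the order of limits you prescribe: with $\eta$ fixed, then $\varepsilon\to0$, then $\kappa\to0$, the quantities $\eta/\kappa$ and $\eta\log(1/\kappa)$ blow up when $\kappa\to0$. Your parenthetical repair ("still tends to $0$ if $\eta\to0$ last") is exactly backwards. In fact the pointwise bound $g(z):=\bigl[u(\xbar+z)-u(\xbar)\bigr]-\bigl[v(\ybar+z)-v(\ybar)\bigr]\le 2\eta|z|$ should not be fed into the integral as is; the correct move on the annulus $\kappa<|z|<1$ is to integrate the $\phi$-increment itself and use that the linear part $\eta\nabla\langle\xbar\rangle\cdot z+\eta\nabla\langle\ybar\rangle\cdot z$ vanishes by odd symmetry, leaving only the Taylor remainder which is $O(\eta|z|^2)$ and hence integrates to $O(\eta)$ uniformly in $\kappa$. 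Similarly, your assertion that on $B_1^c$ "one uses boundedness of $u,v$ directly for an $O(1)\eta$ bound" is not right: boundedness alone gives $O(1)$, not $O(\eta)$. The argument the paper's Lipschitz proof uses for $|z|>1$ is the difference-of-suprema trick: $g(z)\le \sup_{x,y}\bigl(u(x)-v(y)-\frac{|x-y|^2}{2\varepsilon}\bigr)-M_{\varepsilon,\eta}$, which is a constant in $z$ that tends to zero as $\eta\to0$ (for fixed $\varepsilon$), and which integrates against the convergent tail $\int_{|z|>1}|z|^{-(n+1)}dz$. Without one of these two devices, the far-field contribution does not vanish, and you do not obtain the contradiction $\lambda M\le 0$.

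A secondary issue: the subsolution inequality carries $a^{\alpha,\beta}(\xbar)$ in front of the nonlocal term while the supersolution carries $a^{\alpha,\beta}(\ybar)$, and your proposed bound on $\mathcal{I}^\kappa(u)(\xbar)-\mathcal{I}^\kappa(v)(\ybar)$ does not account for the mismatch in these coefficients, whose difference multiplies a quantity that is only $O(1/\kappa)$ pointwise. The paper deals with this by first invoking Lemma \ref{lem:reduction-1} (via \ref{A4}) to reduce to the case $a^{\alpha,\beta}\equiv1$ before running the doubling argument (see the remark preceding the proof of Theorem \ref{thm:lipschitz}). You should either perform that reduction first, or explicitly control the term $\bigl(a^{\alpha,\beta}(\xbar)-a^{\alpha,\beta}(\ybar)\bigr)\mathcal{I}^\kappa(v)(\ybar)$, which is not immediate.
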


\begin{proof}
The proof uses the so-called doubling of variables, and the essential ingredients of this method are detailed in the proof of  Theorem \ref{thm:lipschitz}. This type of comparison principle is available in much more generality and  could be found in \cite{BI:P07}. 
\end{proof}

\begin{theorem}[Existence] \label{thm:exisence}Let \ref{A1}-\ref{A4} be true. There exists unique viscosity solution $u \in 
C_b(\R^n)$ of the equation \eqref{eq:HJB-master}.
\end{theorem}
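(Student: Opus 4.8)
The plan is to obtain existence by Perron's method, using the comparison principle (Lemma \ref{comparison}) as the uniqueness input, and then to separately supply the barriers needed to start Perron. I would work with the alternative formulation (Definition \ref{defi:viscosolution-alt}), which is tailored to Perron's method because it only requires testing against $C^2_b$ functions and writes the nonlocal term as the full operator $(-\Delta)^{1/2}\varphi$. The first step is to construct a global bounded subsolution $\underline{u}$ and a global bounded supersolution $\overline{u}$ of \eqref{eq:HJB-master} with $\underline{u}\le\overline{u}$. Because of the strong zeroth-order coercivity in \ref{A3} (with $\lambda$ large) and the boundedness in \ref{A2}, constants work: choosing $\overline{u}\equiv M$ for $M$ large enough that $\lambda M \ge \sup_{\alpha,\beta,x}|f^{\alpha,\beta}(x)|$ makes $F(x,M,0,0) = \sup_\alpha\inf_\beta\{f^{\alpha,\beta}(x)+c^{\alpha,\beta}(x)M\}\ge 0$ (note $(-\Delta)^{1/2}$ of a constant is $0$), so the constant $M$ is a (classical, hence viscosity) supersolution; symmetrically $\underline{u}\equiv -M$ is a subsolution, and $-M\le M$.

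The second step is the core Perron argument. Define
\[
 u(x) = \sup\Set{w(x) : w\in USC_b(\R^n),\ w \text{ is a viscosity subsolution of } \eqref{eq:HJB-master},\ \underline{u}\le w\le \overline{u}}.
\]
This set is nonempty (it contains $\underline{u}$) and uniformly bounded by $\overline{u}$. One then shows, by the standard bump-construction argument adapted to the nonlocal setting, that the upper semicontinuous envelope $u^*$ is a subsolution and the lower semicontinuous envelope $u_*$ is a supersolution; in the nonlocal case the only extra care is that when one perturbs near a test point to produce a strictly larger subsolution, the modification must be done globally (or one must check the nonlocal integral picks up the right sign), which is handled because $(-\Delta)^{1/2}$ applied to a $C^2$ bump that is a small localized perturbation changes the value continuously and the perturbation can be taken supported away from infinity. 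Since $u_*$ is a supersolution and $u^*$ a subsolution, the comparison principle gives $u^* \le u_*$; combined with the trivial $u_* \le u \le u^*$ this forces $u_* = u = u^*$, so $u\in C_b(\R^n)$ and $u$ is a viscosity solution. Uniqueness is then immediate: if $u_1, u_2$ are two solutions, applying Lemma \ref{comparison} both ways gives $u_1\le u_2$ and $u_2\le u_1$.

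The last loose end is the equivalence of Definition \ref{defi:viscosolution} and Definition \ref{defi:viscosolution-alt}, since the comparison principle and the existence construction are most naturally phrased with the latter while the former is the ``working'' definition used elsewhere; this equivalence is routine — given a test function one splits $(-\Delta)^{1/2}\varphi = \mathcal{I}_\kappa(\varphi) + \mathcal{I}^\kappa(\varphi)$ and, at a global extremum of $u-\varphi$ with $u=\varphi$ there, replaces $\mathcal{I}^\kappa(\varphi)$ by $\mathcal{I}^\kappa(u)$ using the sign of $u-\varphi$, then lets the finite-range part be handled by the $C^2$ regularity of $\varphi$; letting $\kappa\to 0$ or noting it holds for all $\kappa$ gives the other direction. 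I expect the main obstacle to be the Perron bump-construction in the nonlocal setting: one must verify carefully that the local perturbation used to contradict maximality of $u$ does not destroy the subsolution property through the long-range tail of the fractional Laplacian. This is standard (it appears, e.g., in the references \cite{BI:P07, Jakobsen:2005jy}), but it is the step that genuinely uses the structure of the nonlocal operator rather than just boundedness and coercivity, so I would cite those works for the technical core and only spell out the constant-barrier construction in detail here.
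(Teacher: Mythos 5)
Your proposal matches the paper's proof in both structure and substance: constant barriers $\pm M/\lambda$ (you write $\pm M$ with $\lambda M\ge\sup|f|$, which is the same thing) built from \ref{A3} and boundedness of $f^{\alpha,\beta}$, followed by Perron's method — showing the upper envelope of the sup of subsolutions is itself a subsolution and the lower envelope is a supersolution via the bump argument, then closing with the comparison principle. The only cosmetic difference is that you flag the nonlocal-tail issue in the bump construction and the equivalence of the two definitions more explicitly than the paper (which also leaves both to cited references), so this is essentially the same proof.
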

\begin{proof}
 The uniqueness follows from comparison principle. The proof of existence is standard and uses the classical Perron's method. However, for the sake completeness of our presentation, we add a detailed proof in Appendix \ref{appendix}.
\end{proof}

  It is fairly straightforward to prove that, under \ref{A1}-\ref{A4}, the unique viscosity solution is H\"{o}lder continuous of some exponent $\gamma$. However, we are able to deal with solutions that are Lipschitz continuous and this could be ensured by assuming that the constant $\lambda$ in \ref{A3} is large enough.

\begin{theorem}[Lipschitz continuity]\label{thm:lipschitz}
Assume that \ref{A1}-\ref{A4} hold and $u\in C_b(\R^n)$ be the unique viscosity solution of $$\displaystyle \sup_{\A \in \SA} \inf_{\beta \in \SB} \{ f^{\A , \beta} (x) + c^{\A , \beta} (x) u(x) + 
b^{\A , \beta} (x)\cdot\grad u(x) + a^{\A , \beta} (x) \laplas u(x) \} = 0.$$
Then there is a constant $\lambda_0$ depending on the constants in \ref{A1}, \ref{A2} and \ref{A4} such that   if $\lambda > \lambda_0$ in \ref{A3}, then 

$$ |u(x) - u(y)|\leq L |x-y| \quad \mbox{for each }  \ x,y \in \R^n $$
where $L$ is a positive constant depending on  \ref{A1}-\ref{A4}.

\end{theorem}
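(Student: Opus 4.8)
\medskip

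\noindent\textbf{Proof plan.} The plan is to argue by contradiction with the doubling of variables, exploiting the strong zeroth order damping coming from \ref{A3}. Fix a candidate Lipschitz constant $L>0$, to be pinned down at the end in terms of the data, and assume $\sup_{x,y\in\Rn}\bigl(u(x)-u(y)-L|x-y|\bigr)>0$. For small $\eta>0$ maximise
\[
\Phi(x,y)=u(x)-u(y)-L|x-y|-\eta\bigl(\langle x\rangle+\langle y\rangle\bigr),\qquad \langle x\rangle=(1+|x|^{2})^{1/2}.
\]
Since $u$ is bounded and $\langle x\rangle+\langle y\rangle\to\infty$ at infinity, the supremum $M_\eta$ is attained at some $(x_\eta,y_\eta)$, and for $\eta$ small $M_\eta>0$; in particular $x_\eta\neq y_\eta$, so $z\mapsto L|z|$ is $C^\infty$ near $x_\eta-y_\eta$. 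I would first record the two elementary a priori facts that do the absorbing: comparing $u$ with the constant sub/supersolutions $\pm K/\lambda$ (admissible precisely because $c^{\alpha,\beta}\ge\lambda$ and $|f^{\alpha,\beta}|_0\le K$) via Lemma~\ref{comparison} gives $|u|_0\le K/\lambda$; and the a priori H\"older bound $|u(x)-u(y)|\le C_\gamma|x-y|^\gamma$ is available under \ref{A1}--\ref{A4}. These already force $u(x_\eta)-u(y_\eta)>L|x_\eta-y_\eta|$ and hence $d_\eta:=|x_\eta-y_\eta|<(C_\gamma/L)^{1/(1-\gamma)}$, so the separation is automatically small once $L$ is large.

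Next, $x\mapsto u(x)-\varphi_1(x)$ with $\varphi_1(x)=L|x-y_\eta|+\eta\langle x\rangle$ and $y\mapsto u(y)-\varphi_2(y)$ with $\varphi_2(y)=-L|x_\eta-y|-\eta\langle y\rangle$ have a global maximum at $x_\eta$ and a global minimum at $y_\eta$ (after the standard harmless modification of $\varphi_1,\varphi_2$ far from $x_\eta,y_\eta$ to make them globally $C^2$). Using Definition~\ref{defi:viscosolution} in its split form, for every $\kappa\in(0,\tfrac12 d_\eta)$ I obtain — after the usual $\sup$--$\inf$ selection, up to an arbitrarily small slack — a pair $(\alpha_0,\beta_0)$ with
\[
\begin{aligned}
&\bigl[f^{\alpha_0,\beta_0}(x_\eta)-f^{\alpha_0,\beta_0}(y_\eta)\bigr]
 +\bigl[c^{\alpha_0,\beta_0}(x_\eta)u(x_\eta)-c^{\alpha_0,\beta_0}(y_\eta)u(y_\eta)\bigr]\\
&\quad+\bigl[b^{\alpha_0,\beta_0}(x_\eta)\!\cdot\!\grad\varphi_1(x_\eta)-b^{\alpha_0,\beta_0}(y_\eta)\!\cdot\!\grad\varphi_2(y_\eta)\bigr]
 +\bigl[a^{\alpha_0,\beta_0}(x_\eta)\Iopsub(\varphi_1)(x_\eta)-a^{\alpha_0,\beta_0}(y_\eta)\Iopsub(\varphi_2)(y_\eta)\bigr]\\
&\quad+\bigl[a^{\alpha_0,\beta_0}(x_\eta)\Iopsup(u)(x_\eta)-a^{\alpha_0,\beta_0}(y_\eta)\Iopsup(u)(y_\eta)\bigr]\ \le\ 0 .
\end{aligned}
\]
Then I would estimate the five brackets: (i) by \ref{A2} the $f$-bracket is $O(d_\eta)$; (ii) splitting $c^{\alpha_0,\beta_0}(x_\eta)u(x_\eta)-c^{\alpha_0,\beta_0}(y_\eta)u(y_\eta)=c^{\alpha_0,\beta_0}(x_\eta)(u(x_\eta)-u(y_\eta))+(c^{\alpha_0,\beta_0}(x_\eta)-c^{\alpha_0,\beta_0}(y_\eta))u(y_\eta)$ and using $u(x_\eta)-u(y_\eta)\ge 0$ with \ref{A3}, the $c$-bracket is $\ge\lambda(u(x_\eta)-u(y_\eta))-K|u|_0 d_\eta$ — the decisive good term; (iii) $\grad\varphi_1(x_\eta)$ and $\grad\varphi_2(y_\eta)$ coincide up to $O(\eta)$ and both have norm $\le L+\eta$, so by \ref{A2} the gradient bracket is $\ge-KL d_\eta-O(\eta)$; (iv) a second order Taylor expansion and the oddness of $z$ on $B_\kappa$ give $|\Iopsub(\varphi_i)|\le C\kappa\,\|D^2\varphi_i\|_{B_\kappa}\le C\kappa(L/d_\eta+\eta)$, so with \ref{A4}, \ref{A2} the local nonlocal bracket is $\ge-C\kappa L/d_\eta-O(\eta\kappa)$; (v) for the tail I write the bracket as $a^{\alpha_0,\beta_0}(x_\eta)\bigl(\Iopsup(u)(x_\eta)-\Iopsup(u)(y_\eta)\bigr)+\bigl(a^{\alpha_0,\beta_0}(x_\eta)-a^{\alpha_0,\beta_0}(y_\eta)\bigr)\Iopsup(u)(y_\eta)$ and invoke the key inequality $\Phi(x_\eta+z,y_\eta+z)\le\Phi(x_\eta,y_\eta)$, which yields $[u(x_\eta+z)-u(x_\eta)]-[u(y_\eta+z)-u(y_\eta)]\le 2\eta|z|$; together with $|u|_0<\infty$ and splitting the $z$-integral at $|z|=2|u|_0/\eta$ this gives $\Iopsup(u)(x_\eta)-\Iopsup(u)(y_\eta)\ge-C\eta\log(1/(\eta\kappa))-C\eta$, vanishing as $\eta\to0$ for fixed $\kappa$, while $|\Iopsup(u)(y_\eta)|\le C|u|_0/\kappa$ makes the last summand $\ge-CK|u|_0 d_\eta/\kappa$.

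Finally, substituting these bounds and letting the $\sup$--$\inf$ slack and then $\eta$ go to zero (so that $x_\eta,y_\eta,d_\eta$ converge along a subsequence to $x_0,y_0,d_0>0$) reduces everything to an inequality of the form
\[
\lambda\bigl(u(x_0)-u(y_0)\bigr)\ \le\ C_0 d_0+C_1 L d_0+C_2\kappa L/d_0+C_3 K|u|_0 d_0/\kappa ,
\]
with $C_0,C_1,C_2,C_3$ depending only on \ref{A1}, \ref{A2}, \ref{A4}. Using $u(x_0)-u(y_0)>L d_0$, inserting $|u|_0\le K/\lambda$, choosing $\kappa$ to balance the last two terms, and handling the residual range of very small $d_0$ through the a priori H\"older bound, one checks that the right-hand side is strictly below $\lambda L d_0$ once $\lambda$ exceeds a threshold $\lambda_0=\lambda_0(K,\lambda_1,\SA,\SB)$ and $L$ is then fixed (large, in terms of $C_\gamma$ and the data) — a contradiction; the symmetry $x\leftrightarrow y$ then gives $|u(x)-u(y)|\le L|x-y|$. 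I expect the delicate point to be exactly this closing bookkeeping: the local truncation $\Iopsub(\varphi_i)$ is driven by test functions whose Hessian is of size $L/d_\eta$, forcing $\kappa\ll d_\eta$, which in turn inflates the tail error $|\Iopsup(u)(y_\eta)|\sim|u|_0/\kappa$; it is the interplay of these two opposing effects — the manifestation of the criticality of order $\tfrac12$, where the drift and the fractional diffusion sit at the same scale — that has to be beaten by the damping $\lambda(u(x_\eta)-u(y_\eta))$, and this is precisely why $\lambda$ must be taken large.
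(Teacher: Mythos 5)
Your overall scheme — doubling of variables, exploiting the damping from \ref{A3}, using the split definition with local/tail pieces and the inequality $\Phi(x_\eta+z,y_\eta+z)\le\Phi(x_\eta,y_\eta)$ for the tail — is in the same spirit as the paper, but you depart from it in two ways: you penalise linearly with $L|x-y|$ inside a contradiction argument (the paper penalises quadratically with $\tfrac{\gamma}{2}|x-y|^2$ and then optimises over $\gamma$ directly), and, more importantly, you keep the $a^{\alpha,\beta}$-dependence throughout, whereas the paper first invokes Lemma~\ref{lem:reduction-1} and the remark preceding its proof to normalise $a^{\alpha,\beta}\equiv 1$.

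This second departure is where the argument has a genuine gap. Keeping $a^{\alpha,\beta}$ produces the cross-term $\bigl(a^{\alpha_0,\beta_0}(x_\eta)-a^{\alpha_0,\beta_0}(y_\eta)\bigr)\Iopsup(u)(y_\eta)$, which you bound by $CK|u|_0 d_\eta/\kappa$; this forces you to keep $\kappa$ bounded below, and balancing it against the local contribution $C_2\kappa L/d_\eta$ (which itself is large because the Hessian of $L|x-y_\eta|$ is of size $L/d_\eta$) leaves a residual of order $\sqrt{L|u|_0}$ that does \emph{not} scale with $d_0$. Your closing inequality is therefore, after dividing by $d_0$,
\begin{align*}
\lambda L \;<\; C_0 + C_1 L + 2\sqrt{C_2C_3K|u|_0 L}\,\big/ d_0 ,
\end{align*}
and there is no data-dependent lower bound on $d_0$ to rule this out: the only lower bound available, $d_0\ge (M/(2C_\gamma))^{1/\gamma}$ with $M=\sup(u(x)-u(y)-L|x-y|)$, involves the very quantity you are trying to prove is nonpositive, so no choice of $\lambda_0$ (independent of $L$ and $d_0$, as the theorem requires) produces a contradiction. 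Invoking the a priori H\"older bound does not repair this: it gives only an upper bound on $d_0$, and even if one sharpens $|\Iopsup u(y_\eta)|\lesssim C_\gamma\kappa^{\gamma-1}$ the balanced residual still diverges like $d_0^{-2(1-\gamma)/(2-\gamma)}$ after division by $d_0$. The fix is exactly the paper's: apply Lemma~\ref{lem:reduction-1} first, so that $a^{\alpha,\beta}\equiv 1$, the cross-term disappears, the tail difference $\Iopsup u(x_\eta)-\Iopsup u(y_\eta)$ vanishes as $\eta\to 0$ by your own estimate, and the local piece $C_2\kappa L/d_\eta$ vanishes as $\kappa\to 0$; then your linear penalisation does close, with $\lambda_0$ essentially the Lipschitz constant of the (normalised) drift. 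Note also that after the normalisation your route and the paper's quadratic one are interchangeable, but the paper's avoids the nondifferentiability of $|x-y|$ at the diagonal and the attendant modifications of the test functions.
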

\begin{proof} The proof is deferred to Appendix \ref{appendix}.
\end{proof}

\subsection{The main result and outline of our strategy} The main contribution in this article is stated as follows. 
\begin{theorem}[$C^{1,\sigma}$-regularity] \label{main-theorem}Let \ref{A1}-\ref{A4} hold and the constant $\lambda$ in \ref{A3} be large enough such that the unique viscosity solution $u$ of \eqref{eq:HJB-master} is Lipschitz continuous. Then $u$ is continuously differentiable and there is a constant $\sigma\in (0,1)$ and a constant $C$ such that 
$$| \nabla u(x)-\nabla u(y) | \le C|x-y|^{\sigma}$$

for all $x, y\in \Rn$.
\end{theorem}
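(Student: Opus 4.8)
The plan is to follow the now-standard ``improvement of flatness'' / Schauder-type bootstrap adapted to the critical nonlocal setting, in the spirit of Caffarelli--Silvestre and Silvestre's work that the paper cites. The key structural observation is that, because $u$ is a priori Lipschitz, at the critical order $s=\tfrac12$ the gradient term $b^{\alpha,\beta}\cdot\nabla u$ and the diffusion term $a^{\alpha,\beta}(-\Delta)^{1/2}u$ scale the same way, so each rescaling at a point $x_0$ produces an equation of exactly the same type with the same structural constants; the only thing that changes under rescaling is the modulus of continuity of the coefficients $a^{\alpha,\beta},b^{\alpha,\beta},c^{\alpha,\beta},f^{\alpha,\beta}$, which \emph{improves} because of the Lipschitz bound in \ref{A2}. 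So the scheme is: (i) reduce to a translation-invariant (frozen-coefficient) problem at each point, treating the $x$-dependence as a small perturbation; (ii) prove a diminish-of-oscillation lemma for the translation-invariant equation, namely that the oscillation of $u$ (after subtracting an affine function) in $B_r$ decays geometrically, $r\mapsto \theta r$ with $\theta<1$, losing a universal factor; (iii) iterate this across dyadic scales to produce, at each $x_0$, an affine function $\ell_{x_0}(x)=u(x_0)+p_{x_0}\cdot(x-x_0)$ with $\sup_{B_r(x_0)}|u-\ell_{x_0}|\le C r^{1+\sigma}$; (iv) conclude that $x_0\mapsto p_{x_0}=\nabla u(x_0)$ is well-defined and $C^{0,\sigma}$, which gives $u\in C^{1,\sigma}$ and hence classical solvability. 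Section \ref{oscillation} is announced as providing exactly the diminish-of-oscillation lemma, and Section \ref{holder-regularity} the bootstrap, so the proof here is essentially the assembly of those two.

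More concretely, I would argue as follows. Fix $x_0$ and for $r\in(0,1)$ set $u_r(x)=\frac{1}{r}\big(u(x_0+rx)-u(x_0)\big)$. Since $u$ is globally Lipschitz with constant $L$, each $u_r$ is Lipschitz with the same constant $L$ and $u_r(0)=0$, so the family $\{u_r\}$ is precompact in $C_{\mathrm{loc}}$. A direct computation using \eqref{fraclap} shows $(-\Delta)^{1/2}u_r(x)=(-\Delta)^{1/2}u(x_0+rx)$, and $\nabla u_r(x)=\nabla u(x_0+rx)$, so $u_r$ is a viscosity solution (in the sense of Definition \ref{defi:viscosolution}) of
\begin{align*}
\sup_{\alpha\in\mathcal{A}}\inf_{\beta\in\mathcal{B}}\Big\{ r f^{\alpha,\beta}(x_0+rx)+ r c^{\alpha,\beta}(x_0+rx) u_r(x) + r c^{\alpha,\beta}(x_0+rx)\tfrac{u(x_0)}{r} \\ + b^{\alpha,\beta}(x_0+rx)\cdot\nabla u_r(x)+ a^{\alpha,\beta}(x_0+rx)(-\Delta)^{1/2}u_r(x)\Big\}=0,
\end{align*}
which, after absorbing the bounded zero-order contributions into the ``$f$'' slot, is again of the form \eqref{eq:HJB-master}--\eqref{eq:nonlinearity} with coefficients $\tilde a^{\alpha,\beta}_r(x)=a^{\alpha,\beta}(x_0+rx)$, etc. The crucial point is that $\tilde a^{\alpha,\beta}_r$ has Lipschitz seminorm $rK\le K$ on $B_1$ and oscillation at most $2rK$, and similarly the effective source term is $O(r)$ small in sup norm on $B_1$ while retaining ellipticity constants $\lambda_1$ (from \ref{A4}) uniformly. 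Thus the rescaled problems form a compact family of equations with \emph{vanishing} coefficient oscillation as $r\to0$; a compactness/stability argument (using the stability of viscosity solutions under uniform limits, as in \cite{BI:P07, Jakobsen:2005jy}) shows any limit solves a translation-invariant equation $\bar F((-\Delta)^{1/2}\bar u)=0$ with $\bar u$ Lipschitz, which by the translation-invariant theory (and the remark in the introduction that translation-invariant solutions of \eqref{eq:HJB-master} are constant) forces the limit to be affine. Quantifying this dichotomy gives the diminish-of-oscillation step, and iterating over $r=2^{-k}$ produces the affine approximations $\ell_{x_0}$ with the claimed $r^{1+\sigma}$ decay; comparing $\ell_{x_0}$ and $\ell_{y_0}$ on $B_{|x_0-y_0|}$ of an intermediate ball yields $|p_{x_0}-p_{y_0}|\le C|x_0-y_0|^\sigma$, so $\nabla u$ exists and is $C^{0,\sigma}$.

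The main obstacle I expect is controlling the gradient term at the critical scale in the diminish-of-oscillation lemma: unlike the supercritical case $s>\tfrac12$, the first-order term does not become a lower-order perturbation under rescaling, so one cannot simply appeal to the pure $(-\Delta)^{1/2}$ regularity theory of \cite{cafarelli:2009}. One must instead run the compactness argument with the gradient term present, which requires (a) a uniform (in $r$) modulus of continuity / Harnack-type estimate for the $u_r$ that survives the presence of $b^{\alpha,\beta}\cdot\nabla u_r$ — this is presumably where \ref{A3} with $\lambda$ large is used, to get a genuine gain rather than just boundedness — and (b) care in the passage to the limit, since the nonlocal term is global and one must ensure the tails $\mathcal{I}^\kappa(u_r)$ are controlled uniformly (again using the global Lipschitz bound, which gives $|\mathcal{I}^\kappa(u_r)(x)|\lesssim L|\log\kappa|$-type bounds, integrable enough for the critical kernel after the affine subtraction). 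A secondary technical point is bookkeeping the non-translation-invariance through the iteration: at scale $2^{-k}$ one subtracts the affine function $\ell^{(k)}$ and must check that the normalized remainder still solves an admissible equation with coefficient oscillation $O(2^{-k})$, which is where the Lipschitz hypothesis \ref{A2} on \emph{all} the coefficients (not just $a^{\alpha,\beta}$) is essential. Granting the diminish-of-oscillation lemma of Section \ref{oscillation}, the rest is the routine Campanato-type iteration carried out in Section \ref{holder-regularity}.
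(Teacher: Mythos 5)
Your proposal takes a genuinely different route from the paper, and it also mischaracterizes what Sections~\ref{oscillation} and~\ref{holder-regularity} actually do. The paper does \emph{not} run an improvement-of-flatness / Campanato iteration on $u$ minus an affine approximant. Instead it differentiates the equation at the level of difference quotients: after the reduction of Lemma~\ref{lem:reduction-1}, Lemma~\ref{dif:quo:lem} and Corollary~\ref{cor:inequalities-difference} show that $v=\partial_{h,\ell}u$ satisfies the pair of viscosity inequalities $-A|\nabla v|-B+\lambda v+(-\Delta)^{1/2}v\le 0$ and $A|\nabla v|+B+\lambda v+(-\Delta)^{1/2}v\ge 0$ with constants $A,B$ independent of $h,\ell$ (this is where \ref{A2} and the Lipschitz bound on $u$ enter). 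Lemma~\ref{time-conversion} then turns these into parabolic drift-diffusion inequalities for $e^{\lambda t}v$, Section~\ref{oscillation} proves a diminishing-of-oscillation lemma for \emph{such} inequalities (not for $u-\ell$), Proposition~\ref{regularity-derivative} iterates that to a uniform-in-$h$ $C^{0,\sigma}$ bound on $\partial_{h,\ell}u$, and Arzel\`a--Ascoli sends $h\to 0$. This is the same strategy as in the cited Silvestre papers; neither they nor the paper do the blow-up-and-Liouville argument you outline.

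There is also a concrete gap in your plan, beyond misattribution. At critical order, if you write $u=\ell_k+r^{1+\sigma}v_k(\cdot/r)$ with $\ell_k$ affine, substitute into $H(x,\nabla u)+\lambda u+(-\Delta)^{1/2}u=0$, and divide by $r^{\sigma}$, the term $r^{-\sigma}\big(H(x,\nabla\ell_k)+\lambda\ell_k(x)\big)$ appears. Its value at the base point is $-r^{-\sigma}(-\Delta)^{1/2}u(x_0)$, and for a merely Lipschitz $u$ that quantity is not a priori finite, so the rescaled family of equations has no uniform bound on its zeroth-order part and the compactness step collapses; this is circular, since pointwise finiteness of $(-\Delta)^{1/2}u$ is essentially what the theorem is trying to establish. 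Your invocation of the introduction's remark that translation-invariant solutions of \eqref{eq:HJB-master} are constant is also misapplied: the blow-up kills the $\lambda u$ term, so the relevant Liouville statement is for $\sup_\alpha\inf_\beta\{\bar b^{\alpha,\beta}\cdot\nabla\bar u+\bar a^{\alpha,\beta}(-\Delta)^{1/2}\bar u\}=0$, a different (and harder) assertion that would itself require the drift-diffusion H\"{o}lder estimate applied across scales. The difference-quotient route sidesteps both issues because $\partial_{h,\ell}u$ is bounded by the Lipschitz constant and the inhomogeneity $B$ in its inequalities is bounded by \ref{A2}, so no uncontrolled constants arise.
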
 The detailed proof of the main theorem is given in Section \ref{holder-regularity}. In the rest of this section, we will briefly outline our strategy.  

 Our aim here is to stay close and follow the methodology used in \cite{silvestre:2010, silvestre:2012} to prove $C^{1,\sigma}$-type regularity estimates for time dependent HJB-equations with $\frac 12$-Laplacian. Besides the absence of time variable, the most crucial difference that we have with \cite{silvestre:2010} is that our problem is not translation invariant. The methodology in \cite{Biswas2013, silvestre:2012} allows Hamiltonians that are not translation invariant in a limited sense. In comparison, \eqref{eq:HJB-master} seems to be loaded with lack of translation invariance in all its constituent components.  However, the special structure of the fractional order term in \eqref{eq:HJB-master} allows us to reduce this `degree' of non-translation invariance and we have following lemma to this end. 
 
\begin{lem}\label{lem:reduction-1}
         A function $u\in USC_b(\Rn)$ ($LSC_b(\Rn)$) is a subsolution ({\em supersolution}) of \eqref{eq:HJB-master} iff it satisfies 
          \begin{align*}
       \sup_{\alpha\in \mathcal{A}} \inf_{\beta\in\mathcal{B}} \Big\{\tilde{f}^{\alpha, \beta}(x) + \tilde{c}^{\alpha,\beta}(x)u(x)+\tilde{b}^{\alpha,\beta}(x)\cdot\nabla u(x) +(-\Delta)^{\frac 12} u(x)\Big\} \le 0~ (\ge 0)
        \end{align*} in the viscosity sense, where $\Big({\tilde{f}}^{\alpha, \beta}, {\tilde{c}}^{\alpha, \beta}, {\tilde{b}}^{\alpha, \beta}\Big) = \frac{1}{a^{\alpha, \beta} } \Big({f}^{\alpha, \beta}, {c}^{\alpha, \beta}, {b}^{\alpha, \beta}\Big). $ 
\end{lem}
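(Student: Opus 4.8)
The plan is to exploit the fact that the $\frac12$-Laplacian appears in \eqref{eq:HJB-master} multiplied by the coefficient $a^{\alpha,\beta}(x)$, which by \ref{A4} is bounded below by $\lambda_1>0$ and by \ref{A2} is bounded above. Dividing the expression inside the $\sup$-$\inf$ by $a^{\alpha,\beta}(x)$ is therefore a legitimate, invertible operation: a real number is $\le 0$ for all $(\alpha,\beta)$ iff its quotient by the positive bounded quantity $a^{\alpha,\beta}(x)$ is $\le 0$, and because $a^{\alpha,\beta}(x)$ is bounded between two positive constants, the supremum over $\alpha$ and infimum over $\beta$ are preserved (the rescaling is strictly order-preserving, uniformly in $(\alpha,\beta)$, so it commutes with $\sup_\alpha\inf_\beta$ in the sense that one inequality holds iff the other does). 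First I would fix a test function $\varphi\in C^2(\Rn)$ (or $C^2_b$, using Definition \ref{defi:viscosolution-alt}) and a point $x$ which is a global maximum of $u-\varphi$; at such a point we may assume $u(x)=\varphi(x)$, so $(-\Delta)^{1/2}u(x)$ makes sense via the splitting $\mathcal I_\kappa(\varphi)(x)+\mathcal I^\kappa(u)(x)$.

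The core computation is then a one-line algebraic identity: for each $(\alpha,\beta)$,
\begin{align*}
\frac{1}{a^{\alpha,\beta}(x)}\Big\{f^{\alpha,\beta}(x)+c^{\alpha,\beta}(x)u(x)+b^{\alpha,\beta}(x)\cdot\nabla\varphi(x)+a^{\alpha,\beta}(x)(-\Delta)^{\frac12}(\varphi)(x)\Big\}\\
=\tilde f^{\alpha,\beta}(x)+\tilde c^{\alpha,\beta}(x)u(x)+\tilde b^{\alpha,\beta}(x)\cdot\nabla\varphi(x)+(-\Delta)^{\frac12}(\varphi)(x),
\end{align*}
with $(\tilde f^{\alpha,\beta},\tilde c^{\alpha,\beta},\tilde b^{\alpha,\beta})=\frac{1}{a^{\alpha,\beta}}(f^{\alpha,\beta},c^{\alpha,\beta},b^{\alpha,\beta})$ exactly as in the statement. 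I would then argue that applying $\sup_\alpha\inf_\beta$ to both sides preserves the sign of the resulting expression: if the original $\sup_\alpha\inf_\beta$ of the bracketed quantity is $\le 0$, then for every $\alpha$ the infimum over $\beta$ is $\le 0$, hence there is an approximating $\beta$ making the bracket $\le\epsilon$, and dividing by $a^{\alpha,\beta}(x)\ge\lambda_1$ keeps it $\le\epsilon/\lambda_1$; taking $\epsilon\to0$ and then $\sup_\alpha$ gives $\le 0$ for the rescaled Hamiltonian. The reverse direction is symmetric, using the upper bound on $a^{\alpha,\beta}$. The supersolution case is identical with inequalities reversed and maxima replaced by minima. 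One should also check the $\kappa$-dependent form of Definition \ref{defi:viscosolution}: since $(-\Delta)^{1/2}\varphi=\mathcal I_\kappa(\varphi)+\mathcal I^\kappa(u)$ at the contact point and $a^{\alpha,\beta}(x)$ multiplies both pieces equally, the same division works verbatim for each $\kappa\in(0,1)$, so the equivalence holds in either formulation of viscosity solution.

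The only genuine subtlety — and the step I would treat most carefully — is the interchange of the rescaling with $\sup_\alpha\inf_\beta$. It is not true in general that $\sup_\alpha\inf_\beta\big(g^{\alpha,\beta}/a^{\alpha,\beta}\big)$ equals $\big(\sup_\alpha\inf_\beta g^{\alpha,\beta}\big)/(\text{something})$; rather, what is true and all that is needed is the equivalence of the two \emph{sign} conditions, which follows because $t\mapsto t/a^{\alpha,\beta}(x)$ is a strictly increasing bijection of $\R$ fixing $0$, uniformly (in the sense of two-sided bounds $\lambda_1\le a^{\alpha,\beta}(x)\le C$) over all $(\alpha,\beta)$. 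I would phrase this as: $\sup_\alpha\inf_\beta H^{\alpha,\beta}\le0\iff$ for all $\alpha$, $\inf_\beta H^{\alpha,\beta}\le0\iff$ for all $\alpha$, $\inf_\beta \tilde H^{\alpha,\beta}\le0\iff\sup_\alpha\inf_\beta\tilde H^{\alpha,\beta}\le0$, where $\tilde H^{\alpha,\beta}=H^{\alpha,\beta}/a^{\alpha,\beta}(x)$ and the middle equivalence uses that $H^{\alpha,\beta}$ and $\tilde H^{\alpha,\beta}$ have the same sign pointwise in $(\alpha,\beta)$ together with compactness of $\mathcal B$ (so the infimum is attained, avoiding any $\epsilon$-argument altogether) from \ref{A1}. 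Continuity of $\tilde f^{\alpha,\beta},\tilde c^{\alpha,\beta},\tilde b^{\alpha,\beta}$ in all variables, needed for the rescaled equation to fall in the same class, is immediate from \ref{A1}, \ref{A2}, \ref{A4} since $a^{\alpha,\beta}$ is continuous and bounded away from zero, and the quotient of a Lipschitz-in-$x$ function by a Lipschitz-in-$x$ function bounded below by $\lambda_1$ is again Lipschitz in $x$ with a controlled constant.
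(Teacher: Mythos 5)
Your proposal is correct and follows essentially the same route as the paper's own proof: reduce the $\sup$-$\inf$ condition to a pointwise-in-$\alpha$ statement about $\inf_\beta$, divide the bracket by $a^{\alpha,\beta}(x)$ using the two-sided bounds from \ref{A2} and \ref{A4}, and observe that this preserves the sign of the $\inf$ in each direction. The paper runs the $\epsilon$-argument you describe (choosing a near-minimizing $\beta$ and using $a^{\alpha,\beta}\ge\lambda_1$), while you additionally note that compactness of $\mathcal{B}$ makes the infimum attained and lets one skip the $\epsilon$; both are fine and yield the same one-paragraph proof.
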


\begin{proof} The proof is elementary and only requires basic reasoning. We will only argue for the subsolution part.  Let $\varphi\in C_b^2(\Rn)$ be such that $u-\varphi$ has a global maximum at $x$. Then 
    \begin{align*}
       &\sup_{\alpha\in \mathcal{A}} \inf_{\beta\in\mathcal{B}} \Big\{f^{\alpha, \beta}(x) + c^{\alpha,\beta}(x)u(x)+b^{\alpha,\beta}(x)\cdot\nabla \varphi(x) +a^{\alpha,\beta}(x)  (-\Delta)^{\frac 12}(\varphi)(x)\Big\} \le 0\\
     \Leftrightarrow   &  \inf_{\beta\in\mathcal{B}} \Big\{f^{\alpha, \beta}(x) + c^{\alpha,\beta}(x)u(x)+b^{\alpha,\beta}(x)\cdot\nabla \varphi(x) +a^{\alpha,\beta}(x)  (-\Delta)^{\frac 12}(\varphi)(x)\Big\} \le 0
        \end{align*} for all $\alpha \in\mathcal{A}$. Now fix $\alpha\in \mathcal{A}$. Then for every $\eps > 0$, there is $\B=\B(\eps, \alpha)$ such that
        \begin{align*}
      & f^{\alpha, \beta}(x) + c^{\alpha,\beta}(x)u(x)+b^{\alpha,\beta}(x)\cdot\nabla \varphi(x) +a^{\alpha,\beta}(x)  (-\Delta)^{\frac 12}(\varphi)(x)\le \epsilon\\
     \Rightarrow ~~& {\tilde{f}}^{\alpha, \beta}(x) + {\tilde{c}}^{\alpha,\beta}(x)u(x)+{\tilde{b}}^{\alpha,\beta}(x)\cdot\nabla \varphi(x) +(-\Delta)^{\frac 12}(\varphi)(x)\le \frac{\epsilon}{\lambda_1}\\
       \Rightarrow ~~ &\inf_{\B\in\mathcal{B}} \Big\{ {\tilde{f}}^{\alpha, \beta}(x) + {\tilde{c}}^{\alpha,\beta}(x)u(x)+{\tilde{b}}^{\alpha,\beta}(x)\cdot\nabla \varphi(x) \Big\}+(-\Delta)^{\frac 12}(\varphi)(x)\le \frac{\epsilon}{\lambda_1} 
        \end{align*} for all $\alpha\in \mathcal{A}$. Therefore
        
        \begin{align*}
         \sup_{\alpha\in\mathcal{A}}\inf_{\B\in\mathcal{B}} \Big\{ {\tilde{f}}^{\alpha, \beta}(x) + {\tilde{c}}^{\alpha,\beta}(x)u(x)+{\tilde{b}}^{\alpha,\beta}(x)\cdot\nabla \varphi(x) \Big\}+(-\Delta)^{\frac 12}(\varphi)(x)\le \frac{\epsilon}{\lambda_1} 
        \end{align*} for $\epsilon > 0$. Therefore, we conclude that 
         \begin{align*}
         \sup_{\alpha\in\mathcal{A}}\inf_{\B\in\mathcal{B}} \Big\{ {\tilde{f}}^{\alpha, \beta}(x) + {\tilde{c}}^{\alpha,\beta}(x)u(x)+{\tilde{b}}^{\alpha,\beta}(x)\cdot\nabla \varphi(x) \Big\}+(-\Delta)^{\frac 12}(\varphi)(x)\le0       
          \end{align*} This completes the proof once we observe that the steps above are reversible.  
        
\end{proof}

 The above lemma clearly shows that solving \eqref{eq:HJB-master} in the viscosity sense is equivalent to solving 
  \begin{align}
        \label{eq:reduction}  \sup_{\alpha\in \mathcal{A}} \inf_{\beta\in\mathcal{B}} \Big\{\tilde{f}^{\alpha, \beta}(x) + \tilde{c}^{\alpha,\beta}(x)u(x)+\tilde{b}^{\alpha,\beta}(x)\cdot\nabla u(x) \Big\}+(-\Delta)^{\frac 12} u(x) =0.
 \end{align} It is also obvious, once we invoke the a priori Lipschitz continuity of unique viscosity solution of \eqref{eq:reduction}, the function $g^{\alpha, \beta}(x) = \big({\tilde{c}}^{\alpha, \beta}(x) -\lambda\big) u(x)+ \tilde{f}^{\alpha, \beta}(x)$ satisfies the same assumptions as $f^{\alpha, \beta}$ and the equation 
   \begin{align}
        \label{eq:reduction-2}  \sup_{\alpha\in \mathcal{A}} \inf_{\beta\in\mathcal{B}} \Big\{g^{\alpha, \beta}(x) +\tilde{b}^{\alpha,\beta}(x)\cdot\nabla u(x) \Big\}+\lambda u(x)+(-\Delta)^{\frac 12} u(x) =0.
 \end{align} has unique viscosity solution which coincides with the unique solution of \eqref{eq:reduction}, which is same the unique solution of \eqref{eq:HJB-master}. Clearly, a $C^{1,\sigma}$ type regularity estimates for Lipschitz continuous solutions of equations of the form \eqref{eq:reduction-2} is sufficient for establishing Theorem \ref{main-theorem}. The equation \eqref{eq:reduction-2} has the form
    \begin{align}
        \label{eq:reduction-3}  H(x,\nabla u(x))+\lambda u(x)+(-\Delta)^{\frac 12} u(x) =0,
        \end{align} where the Hamiltonian $H(x, p)$ is Lipschitz continuous in both its variables,  and we have the a priori knowledge that \eqref{eq:reduction-3} has Lipschitz continuous solution.  The form \eqref{eq:reduction-3} permits us to apply techniques used for parabolic problems \cite{silvestre:2012}, and we motivate our approach as follows. 
        
         For the moment assume that \eqref{eq:reduction-3} has smooth solutions, and $H$ is also smooth. Then for any $\ell \in \Rn $, we differentiate the equation \eqref{eq:reduction-3} with respect to $x$ variable along $\ell$ to obtain
\begin{align}
\label{eq:linerized} D_p H(x, \grad u)\cdot \grad(\partial_\ell u) + D_x H(x,\grad u)\cdot \ell + \lambda \partial_\ell u + \laplas (\partial_\ell u) = 0; 
\end{align}
where $\partial_\ell u$ is the directional derivative of $u$ along $\ell$-direction. We rewrite equation \eqref{eq:linerized} as 
\begin{align} \label{aux}
A(x)\cdot\grad v + \lambda v + F(x) + \laplas v = 0,  
\end{align}
where $v= \partial_\ell u$, $A(x)= D_p H(x, \grad u)$ and $F(x)= D_x H(x,\grad u)\cdot \ell$.  At this point, the functions $A(x)$ and $F(x)$ are also unknown. The only a priori information we have here is that they are bounded, thanks to the Lipschitz continuity of $u$ and $H$. More specifically,  
if $L$ is the Lipschitz constant of $u$, then we could find two positive constants  $A$ and $B$ such that $\sup_{x\in \Rn, |p|< L} |D_p H(x, p)| \leq A$ and $\sup_{x\in \R^n, |p|< L}|D_x H(x, p)|\leq B $ and then \eqref{aux} results in the inequalities 

\begin{align}\label{eq:aux1}
-A|\grad v| -B +\lambda v + \laplas v \leq 0 \\
\label{eq:aux2}
A|\grad v| +B +\lambda v + \laplas v \geq 0 .
\end{align}
        Thus if $u$ is a smooth solution of \eqref{eq:reduction-3} it follows that $\partial_\ell u$ would satisfy 
\eqref{eq:aux1} and \eqref{eq:aux2}. A priori the viscosity solution is only Lipschitz continuous  and the inequalities \eqref{eq:aux1} and \eqref{eq:aux2} for the directional derivatives are ill-defined.
 However, at a formal level, \eqref{eq:aux1} and \eqref{eq:aux2} are also the inequalities satisfied by the  difference quotients of smooth solutions. Encouragingly, the difference quotients of a Lipschitz continuous viscosity solution are bounded continuous functions and we establish that  they obey the inequalities  \eqref{eq:aux1} -\eqref{eq:aux2} in the viscosity sense. This allows us to obtain uniform $C^{0,\sigma}$-type estimate for the difference quotients, which is then suitably translated into a regularity estimate for the derivative. The following lemma is a key step in establishing the inequalities \eqref{eq:aux1} and \eqref{eq:aux2}
 within the viscosity solution framework. 
 
 \begin{lem} \label{dif:quo:lem}
Assume \ref{A1}-\ref{A4} hold and $\ell \in \Rn$ be a given vector.  Furthermore, let $u, v$ be two bounded and Lipschitz continuous functions satisfying the inequalities 
\begin{align*}
     &  \sup_{\alpha\in \mathcal{A}} \inf_{\beta\in\mathcal{B}} \Big\{f^{\alpha, \beta}(x+\ell) + c^{\alpha,\beta}(x+\ell)u(x)+b^{\alpha,\beta}(x+\ell)\cdot\nabla u(x) \\& \hspace{7cm}+a^{\alpha,\beta}(x+\ell)  (-\Delta)^{\frac 12}u(x)\Big\} \le 0,\\
&  \sup_{\alpha\in \mathcal{A}} \inf_{\beta\in\mathcal{B}} \Big\{f^{\alpha, \beta}(x) + c^{\alpha,\beta}(x)v(x)+b^{\alpha,\beta}(x)\cdot\nabla v(x) +a^{\alpha,\beta}(x)  (-\Delta)^{\frac 12}v(x)\Big\} \ge 0
\end{align*}
in the viscosity sense. Then there are three positive constants $A, B_0$ and $B_1$ (independent of $\ell$) such that $(u-v)$ satisfies 
\begin{align}
\label{eq:diff-ineq-1}-B_0\sup_{x\in \R^n} |u(x+\ell)-v(x)|-A|\grad(u-v)| -B_1|\ell| + \lambda(u-v) + \laplas (u-v) \leq 0
\end{align}
in viscosity sense. Also if $u, v$  respectively satisfy the inequalities
\begin{align*}
        &\sup_{\alpha\in \mathcal{A}} \inf_{\beta\in\mathcal{B}} \Big\{f^{\alpha, \beta}(x+\ell) + c^{\alpha,\beta}(x+\ell)u(x)+b^{\alpha,\beta}(x+\ell)\cdot\nabla u(x) \\&\hspace{7cm}+a^{\alpha,\beta}(x+\ell)  (-\Delta)^{\frac 12}u(x)\Big\} \ge 0,\\
  &\sup_{\alpha\in \mathcal{A}} \inf_{\beta\in\mathcal{B}} \Big\{f^{\alpha, \beta}(x) + c^{\alpha,\beta}(x)v(x)+b^{\alpha,\beta}(x)\cdot\nabla v(x) +a^{\alpha,\beta}(x)  (-\Delta)^{\frac 12}v(x)\Big\} \le 0
\end{align*}
in viscosity sense, then $(u-v)$ satisfies 
\begin{align}
\label{eq:diff-ineq-2}B_0\sup_{x\in \R^n} |u(x+\ell)-v(x)|+A|\grad(u-v)| +B_1|\ell| + \lambda(u-v) + \laplas (u-v) \geq 0
\end{align}
in the viscosity sense. 
\end{lem}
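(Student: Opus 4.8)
We would read this lemma off the comparison-principle machinery --- doubling of variables (cf.\ Lemma \ref{comparison} and \cite{BI:P07}) --- carefully retaining all error terms. It suffices to prove \eqref{eq:diff-ineq-1}, since \eqref{eq:diff-ineq-2} follows by exchanging the roles of sub- and supersolution. The first step is to invoke Lemma \ref{lem:reduction-1}, applied with the data $f^{\alpha,\beta},c^{\alpha,\beta},b^{\alpha,\beta},a^{\alpha,\beta}$ read at $x+\ell$ in the first inequality and at $x$ in the second: this reduces us to the normalised situation $a^{\alpha,\beta}\equiv1$, the coefficients $\tilde f,\tilde c,\tilde b$ retaining the bounds of \ref{A1}--\ref{A2} (with constants also depending on $K,\lambda_1$) and $\tilde c^{\alpha,\beta}$ bounded below by a positive constant still denoted $\lambda$. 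We then set $A:=\sup_{\alpha,\beta,x}|\tilde b^{\alpha,\beta}(x)|$ and $B_0:=\sup_{\alpha,\beta,x}\tilde c^{\alpha,\beta}(x)-\lambda$. Fixing $\psi\in C^2_b(\Rn)$ for which $w-\psi$, $w:=u-v$, has a global maximum at some $\hat x$, and performing the usual modification of $\psi$ by a bounded $C^2$ penalty vanishing at $\hat x$ (so that the maximum is strict, $w(\hat x)=\psi(\hat x)$, and the suprema below are attained), the task becomes to establish
\[
   -B_0 M_\ell-A\,|\nabla\psi(\hat x)|-B_1|\ell|+\lambda\,w(\hat x)+\laplas\psi(\hat x)\le0,\qquad M_\ell:=\sup_{x\in\Rn}|u(x+\ell)-v(x)|,
\]
for a suitable $B_1$ independent of $\ell$ and $\psi$.

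\textbf{Doubling.} For $\epsilon>0$ we maximise $\Phi_\epsilon(x,y)=u(x)-v(y)-\psi(x)-\frac{1}{2\epsilon}|x-y|^2$, at a point $(x_\epsilon,y_\epsilon)$; the classical estimates give $x_\epsilon,y_\epsilon\to\hat x$, $\frac{1}{\epsilon}|x_\epsilon-y_\epsilon|^2\to0$, and (using the Lipschitz bound on $v$) $|p_\epsilon|\le2L$ with $p_\epsilon:=\frac{1}{\epsilon}(x_\epsilon-y_\epsilon)$, $L$ a common Lipschitz constant of $u,v$. Since $x\mapsto u(x)-\Psi^1(x)$, $\Psi^1(x):=\psi(x)+\frac{1}{2\epsilon}|x-y_\epsilon|^2$, attains a global maximum at $x_\epsilon$ and $y\mapsto v(y)-\Psi^2(y)$, $\Psi^2(y):=-\frac{1}{2\epsilon}|x_\epsilon-y|^2$, a global minimum at $y_\epsilon$, we would apply Definition \ref{defi:viscosolution} to the two hypotheses (after a harmless truncation of $\Psi^1,\Psi^2$ far from $x_\epsilon,y_\epsilon$), using the split form with an arbitrary level $\delta\in(0,1)$ --- here it is essential to keep $\delta>0$. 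Subtracting, via the standard selection (a $\tau$-optimal $\alpha^*$ in the outer supremum of the supersolution inequality, then a $\tau$-optimal $\beta^*$ in the matching infimum of the subsolution inequality), one obtains for every $\tau>0$
\begin{align*}
  \big[\tilde f^{\alpha^*,\beta^*}(x_\epsilon+\ell)-\tilde f^{\alpha^*,\beta^*}(y_\epsilon)\big]
  & + \big[\tilde c^{\alpha^*,\beta^*}(x_\epsilon+\ell)u(x_\epsilon)-\tilde c^{\alpha^*,\beta^*}(y_\epsilon)v(y_\epsilon)\big] \\
  & + \big[\tilde b^{\alpha^*,\beta^*}(x_\epsilon+\ell)\cdot(\nabla\psi(x_\epsilon)+p_\epsilon)-\tilde b^{\alpha^*,\beta^*}(y_\epsilon)\cdot p_\epsilon\big] \\
  & + \big[\mathcal{I}_\delta\Psi^1(x_\epsilon)-\mathcal{I}_\delta\Psi^2(y_\epsilon)\big]+\big[\mathcal{I}^\delta u(x_\epsilon)-\mathcal{I}^\delta v(y_\epsilon)\big]\le2\tau .
\end{align*}

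\textbf{Estimating the five brackets.} The $\tilde f$-bracket is $\ge-C(|x_\epsilon-y_\epsilon|+|\ell|)$ by Lipschitz continuity of the data. Writing $\tilde c^{\alpha^*,\beta^*}(x_\epsilon+\ell)u(x_\epsilon)-\tilde c^{\alpha^*,\beta^*}(y_\epsilon)v(y_\epsilon)=\lambda[u(x_\epsilon)-v(y_\epsilon)]+(\tilde c^{\alpha^*,\beta^*}(x_\epsilon+\ell)-\lambda)[u(x_\epsilon)-v(y_\epsilon)]+(\tilde c^{\alpha^*,\beta^*}(x_\epsilon+\ell)-\tilde c^{\alpha^*,\beta^*}(y_\epsilon))v(y_\epsilon)$ and using $0\le\tilde c-\lambda\le B_0$, $|u(x_\epsilon)-v(y_\epsilon)|\le M_\ell+L(|x_\epsilon-y_\epsilon|+|\ell|)$ (triangle inequality through $u(y_\epsilon+\ell)$), the Lipschitz bound on $\tilde c$ and the boundedness of $v$, the $\tilde c$-bracket is $\ge\lambda[u(x_\epsilon)-v(y_\epsilon)]-B_0 M_\ell-C(|x_\epsilon-y_\epsilon|+|\ell|)$. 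As $|p_\epsilon|\le2L$, the $\tilde b$-bracket equals $\tilde b^{\alpha^*,\beta^*}(x_\epsilon+\ell)\cdot\nabla\psi(x_\epsilon)+(\tilde b^{\alpha^*,\beta^*}(x_\epsilon+\ell)-\tilde b^{\alpha^*,\beta^*}(y_\epsilon))\cdot p_\epsilon\ge-A|\nabla\psi(x_\epsilon)|-C(|x_\epsilon-y_\epsilon|+|\ell|)$. Since $\Psi^1,\Psi^2$ are $C^2$, a principal-value Taylor expansion gives $\mathcal{I}_\delta\Psi^1(x_\epsilon)-\mathcal{I}_\delta\Psi^2(y_\epsilon)=\mathcal{I}_\delta\psi(x_\epsilon)-C_n\delta/\epsilon$, the two quadratic penalisations contributing a non-positive term of order $\delta/\epsilon$ that will vanish as $\delta\to0$. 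For the far part, $\Phi_\epsilon(x_\epsilon+z,y_\epsilon+z)\le\Phi_\epsilon(x_\epsilon,y_\epsilon)$ for all $z$ (the term $\frac{1}{2\epsilon}|x-y|^2$ being translation invariant) gives $[u(x_\epsilon+z)-u(x_\epsilon)]-[v(y_\epsilon+z)-v(y_\epsilon)]\le\psi(x_\epsilon+z)-\psi(x_\epsilon)$, whence, integrating over $|z|\ge\delta$, $\mathcal{I}^\delta u(x_\epsilon)-\mathcal{I}^\delta v(y_\epsilon)\ge\mathcal{I}^\delta\psi(x_\epsilon)$. Adding the lower bounds, using $\mathcal{I}_\delta\psi(x_\epsilon)+\mathcal{I}^\delta\psi(x_\epsilon)=\laplas\psi(x_\epsilon)$ and letting $\tau\to0$, one gets, for all $\delta\in(0,1)$ and $\epsilon>0$,
\[
  \lambda[u(x_\epsilon)-v(y_\epsilon)]+\laplas\psi(x_\epsilon)-A|\nabla\psi(x_\epsilon)|-B_0 M_\ell-C(|x_\epsilon-y_\epsilon|+|\ell|)-C_n\delta/\epsilon\le0 .
\]
Letting $\delta\to0$ removes the last term; then $\epsilon\to0$ (so that $x_\epsilon,y_\epsilon\to\hat x$, $u(x_\epsilon)-v(y_\epsilon)\to w(\hat x)$, $\laplas\psi(x_\epsilon)\to\laplas\psi(\hat x)$, $\nabla\psi(x_\epsilon)\to\nabla\psi(\hat x)$) yields \eqref{eq:diff-ineq-1} with $B_1:=C$, completing the verification since $\psi,\hat x$ were arbitrary.

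\textbf{Main obstacle.} The delicate point is the nonlocal term inside the doubling: since $u,v$ are merely Lipschitz, $\laplas u$ and $\laplas v$ may fail to exist, so the cut-off cannot be sent to $0$ inside the equation. The split $\mathcal{I}_\delta,\mathcal{I}^\delta$ is precisely what allows one to keep $\delta>0$, evaluate the singular near part on the smooth test functions $\Psi^1,\Psi^2$ (where it is $O(\delta/\epsilon)$ with a harmless sign and disappears when $\delta\to0$ for fixed $\epsilon$), and evaluate the far part on $u,v$ themselves, controlling its difference solely through the maximum property of $\Phi_\epsilon$ --- which is exactly why one needs the sub-/supersolution inequalities for a sequence $\delta\to0$. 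By contrast, the lack of translation invariance (manifested here only through the shift by $\ell$ in the coefficients) and the fact that $u$ and $v$ are distinct functions produce only the lower-order terms $B_1|\ell|$ and $B_0 M_\ell$ and are routine bookkeeping.
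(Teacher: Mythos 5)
Your proof is correct, and it follows a genuinely different path from the paper. The paper's own argument (borrowed from Ishii) regularises $u$ and $v$ by sup-/inf-convolution, observes that $u^{\varepsilon}$ and $-v_\varepsilon$ are semiconvex so that at a touching point both are punctually $C^{1,1}$ and the inequalities can be subtracted \emph{pointwise}, and then removes $\varepsilon$ via the stability of viscosity solutions under $\Gamma$-convergence (Proposition \ref{prop1}). You instead run the classical doubling-of-variables argument directly, keeping the cut-off level $\delta$ alive, handling the near part of $(-\Delta)^{1/2}$ on the quadratic test functions $\Psi^1,\Psi^2$ (the $O(\delta/\epsilon)$ penalisation term, which you correctly compute and dispose of by sending $\delta\to0$ at fixed $\epsilon$), and controlling the far part solely through the translation-monotonicity $\Phi_\epsilon(x_\epsilon+z,y_\epsilon+z)\le\Phi_\epsilon(x_\epsilon,y_\epsilon)$, which is the standard way the near/far split is exploited for nonlocal comparison. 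The five-bracket bookkeeping, the $\tau$-optimal selection $\alpha^*\to\beta^*$ for the sup-inf structure, and the use of the triangle inequality through $u(y_\epsilon+\ell)$ to extract $B_0 M_\ell$ are all sound, and your constants $A=\sup|\tilde b|$, $B_0=\sup\tilde c-\lambda$, $B_1$ from the Lipschitz moduli are exactly what the lemma asserts and do not depend on $\ell$. The two approaches buy different things: the paper's convolution route moves all the regularity onto the solutions and makes the subtraction literal, at the cost of the $\Gamma$-convergence machinery and of verifying that the convolutions remain approximate solutions (paper's Step 2); your doubling route is more elementary and parallels the paper's own proof of Theorem \ref{thm:lipschitz}, at the cost of the extra penalisation parameter $\epsilon$ and the care needed in ordering the limits $\tau\to0$, $\delta\to0$, $\epsilon\to0$, which you handle correctly.

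One minor remark: since Definition \ref{defi:viscosolution} allows $C^2(\Rn)$ test functions rather than only $C^2_b$, the truncation of $\Psi^1,\Psi^2$ far from $x_\epsilon,y_\epsilon$ is not actually needed — $u-\Psi^1$ is automatically bounded above because $u$ is bounded and $\Psi^1\to+\infty$ — but including it is harmless.
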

  A proof of this lemma requires the notion of $\Gamma-$convergence for sup/inf convolution of upper/lower semicontinuous function. We will define this notion and state a relevant convergence result for semiconvex approximation to upper semicontinuous functions. This type of results are fairly standard and are widely used in the viscosity solution theory, a rigorous proof could be found in \cite{LIC}.
  \begin{defi}[$\Gamma$-convergence]
   A sequence $\{u_k\}_k \in LSC(\Rn)$ is said to \textit{$\Gamma$-converge} to $u$ on $\Rn$ if 
    \\
$i.)$ for any sequence ${x_p} \in \Rn$ with $x_p \rightarrow x$, it holds that $\lim \inf_{p\rightarrow\infty} u_p(x_p) \geq u(x) $.\\
$ii.)$ For every $x\in \Rn$ there exists a sequence $x_p$ with $x_p \rightarrow x$ such that\\ $\lim \sup_{p\rightarrow\infty} 
u_p(x_p)= u(x)$.

  \end{defi}

 \begin{prop} \label{prop1}
 Let $u$ be a bounded upper semicontinuous function in $\Rn$ and for  $\varepsilon >0$, 
the sup-convolution of $u$ is defined as 
$u^{\varepsilon} (x) =  \displaystyle \sup_{ y\in \Rn} \left[ u(x+y) - \frac{|y|^2}{\varepsilon} \right] $.  
The sequence $\{-u^{\varepsilon}\}_{\varepsilon}$, as $\varepsilon \rightarrow 0$,   $\Gamma$-converges to $-u$.
\end{prop}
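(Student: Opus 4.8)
The plan is to prove the two standard assertions about sup-convolutions: that $u^{\varepsilon}\downarrow u$ pointwise (decreasing in $\varepsilon$ and converging to $u$ as $\varepsilon\to 0^+$, using upper semicontinuity), and then to translate this into the $\Gamma$-convergence statement for $\{-u^{\varepsilon}\}_\varepsilon$. First I would record the elementary monotonicity: if $\varepsilon_1 < \varepsilon_2$ then $|y|^2/\varepsilon_1 \ge |y|^2/\varepsilon_2$, so $u^{\varepsilon_1}(x) \le u^{\varepsilon_2}(x)$ for every $x$, and taking $y=0$ gives $u^{\varepsilon}(x)\ge u(x)$. Hence $u^{\varepsilon}(x)$ decreases to a limit $\ge u(x)$ as $\varepsilon\to 0$. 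For the reverse inequality, fix $x$ and pick near-optimal $y_\varepsilon$ with $u^{\varepsilon}(x)\le u(x+y_\varepsilon)-|y_\varepsilon|^2/\varepsilon + \varepsilon$; boundedness of $u$ forces $|y_\varepsilon|^2/\varepsilon \le 2|u|_0 + \varepsilon$, so $y_\varepsilon\to 0$; then $\limsup_{\varepsilon\to 0} u^{\varepsilon}(x)\le \limsup_{\varepsilon\to 0} u(x+y_\varepsilon)\le u(x)$ by upper semicontinuity. Thus $u^{\varepsilon}(x)\downarrow u(x)$ for every $x$.

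Next I would verify the two defining conditions of $\Gamma$-convergence for $v_\varepsilon := -u^{\varepsilon}$ toward $v:=-u$ (note $v_\varepsilon \in LSC(\Rn)$ since $u^{\varepsilon}$, being a sup of continuous functions, is lower semicontinuous — in fact, using the boundedness of $u$ one checks $u^{\varepsilon}$ is even Lipschitz/semiconvex, which makes it continuous). For condition $(ii)$, given $x$ simply take the constant sequence $x_p\equiv x$: then $v_\varepsilon(x) = -u^{\varepsilon}(x)\to -u(x) = v(x)$, so $\limsup v_\varepsilon(x_p)= v(x)$. For condition $(i)$, let $x_p\to x$ (I index by $p$ to match the definition, with $\varepsilon = \varepsilon_p\to 0$ understood, or more precisely one should phrase it along any sequence $\varepsilon_p\to 0$). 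We must show $\liminf_p v_{\varepsilon_p}(x_p)\ge v(x)$, i.e. $\limsup_p u^{\varepsilon_p}(x_p)\le u(x)$. Choose near-optimal $y_p$ with $u^{\varepsilon_p}(x_p)\le u(x_p + y_p) - |y_p|^2/\varepsilon_p + \varepsilon_p$; boundedness again gives $|y_p|^2/\varepsilon_p\le 2|u|_0+\varepsilon_p$, hence $y_p\to 0$ and $x_p+y_p\to x$; then upper semicontinuity of $u$ yields $\limsup_p u(x_p+y_p)\le u(x)$, so $\limsup_p u^{\varepsilon_p}(x_p)\le u(x)$, which is exactly what was needed.

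The only mild subtlety — and the "main obstacle," such as it is — is purely bookkeeping: the definition of $\Gamma$-convergence as stated is for a fixed discrete sequence $\{u_k\}$, whereas here we have a continuum family indexed by $\varepsilon$. I would handle this by remarking that the family $\{-u^{\varepsilon}\}$ $\Gamma$-converges in the sense that every sequence $\varepsilon_p\to 0^+$ produces a $\Gamma$-convergent sequence $\{-u^{\varepsilon_p}\}$ with limit $-u$; the argument above works verbatim for an arbitrary such $\varepsilon_p\to 0$, so no further work is required. Everything else is the standard sup-convolution machinery and the proof is short. For a fully self-contained treatment one may also cite \cite{LIC}, as the excerpt already indicates, but the direct argument sketched here is elementary enough to include.
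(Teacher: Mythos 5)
Your argument is correct and complete: the pointwise monotone convergence $u^{\varepsilon}\downarrow u$, the near-optimizer/boundedness trick forcing $y_p\to 0$, and then upper semicontinuity of $u$ to control $\limsup_p u^{\varepsilon_p}(x_p)$ along an arbitrary $x_p\to x$, $\varepsilon_p\to 0$. One small slip: $u^{\varepsilon}$ is not a supremum of \emph{continuous} functions (each $x\mapsto u(x+y)-|y|^2/\varepsilon$ is only USC), so that parenthetical justification of lower semicontinuity is off; but you immediately give the correct reason — $u^{\varepsilon}(x)+|x|^2/\varepsilon$ is convex as a sup of affine functions, hence $u^{\varepsilon}$ is semiconvex and in particular continuous — so nothing is lost. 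Your remark about discrete-sequence versus continuum indexing is appropriate and handles the only genuine mismatch with the paper's stated definition.

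The paper itself does not prove this proposition; it only remarks that the result is standard and cites \cite{LIC}. So strictly speaking there is no in-paper argument to compare against, and your self-contained proof is a reasonable thing to include in lieu of the bare citation.
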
  A similar assertion holds for a lower semicontinuous function $v$, the sequence of inf-convolutions defined by  $v_\varepsilon(x) = -((-v)^\varepsilon(x))$.

\begin{proof}[Proof of Lemma \ref{dif:quo:lem}:] We will provide the full details for the first half i.e we establish \eqref{eq:diff-ineq-1}. The proof of \eqref{eq:diff-ineq-2} is very similar, and therefore omitted. The proof is divided into three steps. The first step is about simplifying the \eqref{eq:HJB-master}. We also mention that the main ideas of the proof are borrowed from \cite{Ishii:1995gs}.

\vspace{.2cm}
\noindent{\bf Step 1}: As per the given condition $u$ and $v$ respectively are sub- and supersolution of 

\begin{align*}
       & \sup_{\alpha\in \mathcal{A}} \inf_{\beta\in\mathcal{B}} \Big\{f^{\alpha, \beta}(x+\ell) + c^{\alpha,\beta}(x+\ell)u(x)+b^{\alpha,\beta}(x+\ell)\cdot\nabla u(x)\\&\hspace{7cm} +a^{\alpha,\beta}(x+\ell)  (-\Delta)^{\frac 12}u(x)\Big\} = 0,\\
  &\sup_{\alpha\in \mathcal{A}} \inf_{\beta\in\mathcal{B}} \Big\{f^{\alpha, \beta}(x) + c^{\alpha,\beta}(x)v(x)+b^{\alpha,\beta}(x)\cdot\nabla v(x) +a^{\alpha,\beta}(x)  (-\Delta)^{\frac 12}v(x)\Big\} = 0.
\end{align*} We now invoke Lemma \ref{lem:reduction-1} and claim that $u$ and $v$ respectively satisfy

\begin{align}
     \label{eq:reduction21}  & \sup_{\alpha\in \mathcal{A}} \inf_{\beta\in\mathcal{B}} \Big\{\tilde{f}^{\alpha, \beta}(x+\ell) + \tilde{c}^{\alpha,\beta}(x+\ell)u(x)+\tilde{b}^{\alpha,\beta}(x+\ell)\cdot\nabla u(x)\Big\} + (-\Delta)^{\frac 12}u(x) \le 0,\\
   \label{eq:reduction22} &\sup_{\alpha\in \mathcal{A}} \inf_{\beta\in\mathcal{B}} \Big\{\tilde{f}^{\alpha, \beta}(x) + \tilde{c}^{\alpha,\beta}(x)v(x)+\tilde{b}^{\alpha,\beta}(x)\cdot\nabla v(x)\Big\}  +  (-\Delta)^{\frac 12}v(x)\ge  0
\end{align} in the viscosity sense, where 
$\Big({\tilde{f}}^{\alpha, \beta}, {\tilde{c}}^{\alpha, \beta}, {\tilde{b}}^{\alpha, \beta}\Big) = \frac{1}{a^{\alpha, \beta} } \Big({f}^{\alpha, \beta}, {c}^{\alpha, \beta}, {b}^{\alpha, \beta}\Big). $ Now we choose $\lambda >0$ from \ref{A3}, define $\tilde{g}^{\alpha, \beta}_2(x) = \big(\tilde{c}^{\alpha,\beta}(x) -\lambda\big) v(x) +\tilde{f}^{\alpha, \beta}(x) $ and define $\tilde{g}^{\alpha, \beta}_1(x) = \big(\tilde{c}^{\alpha,\beta}(x+\ell) -\lambda\big) u(x) +\tilde{f}^{\alpha, \beta}(x+\ell) $. Then $||\tilde{g}^{\alpha,\beta}_i||_{W^{1,\infty}} < C$ for $i=1,~2$ and $(\alpha,\beta)\in \mathcal{A}\times \mathcal{B}$. Clearly, \eqref{eq:reduction21} and \eqref{eq:reduction22} imply that $u$ and $v$ satisfy

\begin{align}
     \label{eq:reduction23}   \sup_{\alpha\in \mathcal{A}} \inf_{\beta\in\mathcal{B}} \Big\{\tilde{g}_1^{\alpha, \beta}(x) +\tilde{b}^{\alpha,\beta}(x+\ell)\cdot\nabla u(x)\Big\} +\lambda u(x) + (-\Delta)^{\frac 12}u(x) \le 0,\\
   \label{eq:reduction24} \sup_{\alpha\in \mathcal{A}} \inf_{\beta\in\mathcal{B}} \Big\{\tilde{g}_2^{\alpha, \beta}(x) +\tilde{b}^{\alpha,\beta}(x)\cdot\nabla v(x)\Big\} +\lambda v(x) +  (-\Delta)^{\frac 12}v(x)\ge  0
\end{align} in the viscosity sense.

 \noindent{\bf Step 2}: In this step we prove that for every $\delta >0$, there is $\varepsilon_0> 0$ such that 
 
 \begin{align}
     \label{eq:reduction25}   \sup_{\alpha\in \mathcal{A}} \inf_{\beta\in\mathcal{B}} \Big\{\tilde{g}_1^{\alpha, \beta}(x) +\tilde{b}^{\alpha,\beta}(x+\ell)\cdot\nabla u^{\varepsilon}(x)\Big\} +\lambda u^{\varepsilon}(x) + (-\Delta)^{\frac 12}u^{\varepsilon}(x) \le \delta,\\
   \label{eq:reduction26} \sup_{\alpha\in \mathcal{A}} \inf_{\beta\in\mathcal{B}} \Big\{\tilde{g}_2^{\alpha, \beta}(x) +\tilde{b}^{\alpha,\beta}(x)\cdot\nabla v_{\varepsilon}(x)\Big\} +\lambda v_{\varepsilon}(x) +  (-\Delta)^{\frac 12}v_{\varepsilon}(x)\ge  -\delta
\end{align} whenever $\varepsilon < \varepsilon_0$. Note that $u(x)\le u^{\varepsilon}(x)$ and 
\begin{align*}
 u^{\varepsilon}(x) =\sup_{ y\in \Rn} \left[ u(x+y) - \frac{|y|^2}{\varepsilon} \right] 
                       = \sup_{ y\in \Rn} \left\{ u(x+y) - \frac{|y|^2}{\varepsilon}: |y|\le \sqrt{2M\varepsilon} \right\},
\end{align*} where $M = ||u||_{\infty}$. Note that $u(\cdot+y)$ solves 
 \begin{align}
    \label{eq:reduction27}  \sup_{\alpha\in \mathcal{A}} \inf_{\beta\in\mathcal{B}} \Big\{\tilde{g}_1^{\alpha, \beta}(x+y) +\tilde{b}^{\alpha,\beta}(x+y+\ell)\cdot\nabla U(x)\Big\} +\lambda U(x) + (-\Delta)^{\frac 12}U(x) \le 0
 \end{align}in the viscosity sense, and since $\lambda $ is positive, $u(\cdot+y)- \frac{|y|^2}{\varepsilon} $ also solves the same inequality \eqref{eq:reduction27}. Note that $\tilde{b}^{\alpha, \beta}(x), \,u(x)$~ and $g_1^{\alpha, \beta}(x)$ are Lipschitz continuous. Moreover, if $L$ is the Lipschitz constant
 of a generic  $U$, $U-\varphi$ has global maximum at $x$ then $|\nabla \varphi(x)|\le L.$ We use these facts in \eqref{eq:reduction27} and conclude that $U(x) = u(x+y)- \frac{|y|^2}{\varepsilon}  $ satisfies 
 \begin{align}
    \label{eq:reduction28}  \sup_{\alpha\in \mathcal{A}} \inf_{\beta\in\mathcal{B}} \Big\{\tilde{g}_1^{\alpha, \beta}(x) +\tilde{b}^{\alpha,\beta}(x+\ell)\cdot\nabla U(x)\Big\} +\lambda U(x) + (-\Delta)^{\frac 12}U(x) \le C \sqrt{2M\varepsilon}
 \end{align} in the viscosity sense, whenever $|y|\le  \sqrt{2M\varepsilon} $. We now choose $\varepsilon_0$ small enough such that $ C \sqrt{2M\varepsilon_0}< \delta$, then \eqref{eq:reduction28} implies that 
 $U(x) = u(x+y)- \frac{|y|^2}{\varepsilon}  $ satisfies 
 \begin{align}
    \label{eq:reduction28-1}  \sup_{\alpha\in \mathcal{A}} \inf_{\beta\in\mathcal{B}} \Big\{\tilde{g}_1^{\alpha, \beta}(x) +\tilde{b}^{\alpha,\beta}(x+\ell)\cdot\nabla U(x)\Big\} +\lambda U(x) + (-\Delta)^{\frac 12}U(x) \le \delta
 \end{align} in the viscosity sense. It is now straightforward to conclude, as in the proof of Theorem \ref{thm:exisence}, \newline that 
$ u^{\varepsilon} = \sup_{ y\in \Rn} \left\{ u(x+y) - \frac{|y|^2}{\varepsilon}: |y|\le \sqrt{2M\varepsilon} \right\}$ solves
\eqref{eq:reduction28-1} in the viscosity sense whenever $\varepsilon < \varepsilon_0$. The proof of \eqref{eq:reduction26} is similar.

 \vspace{.3cm}
  \noindent{\bf Step 3:} We now want to show that the conclusion from {Step 2} implies that there are constants $B_0$, $B_1$ and $A$ such that
  
       \begin{align*}
& -B_0\sup_{x\in \Rn}|u(x+\ell)-v(x)| -B_1 |\ell |+ \lambda (u^{\varepsilon}-v_{\varepsilon})\\&\hspace{5cm} - A|\nabla (u^{\varepsilon}-v_{\varepsilon}) |+(-\Delta)^{\frac 12}(u^{\varepsilon}-v_{\varepsilon}) \le 2\delta
       \end{align*} in the viscosity sense whenever $\varepsilon$ is small enough. Assume that  $\phi \in C^2(\Rn)$ such that $ (u^{\varepsilon}- v_{\varepsilon}) -\phi$ has global maximum at $x\in \Rn$, and 
$(u^{\varepsilon}- v_{\varepsilon})(x) = \phi(x)$. Note that $u^{\varepsilon}$ and $- v_{\varepsilon}$ are semiconvex functions, 
it means that for each of them there is a tangent paraboloid  touching it from below at every point $x\in \Rn$. 
In addition, $\phi$ touches $u^{\varepsilon}- v_{\varepsilon}$ from above at $x$ , then $u^{\varepsilon}$ 
and $- v_{\varepsilon}$ are $C^{1,1}$ at $x$ . Hence, the derivatives and fractional Laplacian of $u^{\varepsilon},  v_{\varepsilon}$
are well defined  at $x$.  Therefore, we finally have 
\begin{align*}
&-B_0\sup_{x\in\Rn}|u(x)+\ell)-v(x)|-B_1|\ell| \\&\qquad\qquad-A|\nabla(u^{\varepsilon}- v_{\varepsilon})| +\lambda (u^{\varepsilon}- v_{\varepsilon}) 
+ \laplas (u^{\varepsilon}- v_{\varepsilon}) \\
&\leq \sup_{\alpha\in \mathcal{A}} \inf_{\beta\in\mathcal{B}} \Big\{\tilde{g}_1^{\alpha, \beta}(x) +\tilde{b}^{\alpha,\beta}(x+\ell)\cdot\nabla u^{\varepsilon}(x)\Big\}  + \lambda u^{\varepsilon} + \laplas u^{\varepsilon} \\
&\qquad- \sup_{\alpha\in \mathcal{A}} \inf_{\beta\in\mathcal{B}} \Big\{\tilde{g}_2^{\alpha, \beta}(x) +\tilde{b}^{\alpha,\beta}(x)\cdot\nabla v_{\varepsilon}(x)\Big\}-\lambda v_{\varepsilon}(x) -  (-\Delta)^{\frac 12}v_{\varepsilon}(x)
\leq 2\delta.
\end{align*} In the end, we pass to the limit $\varepsilon \rightarrow 0$ and use Proposition \ref{prop1} and the stability of viscosity solutions under $\Gamma$-limit to conclude

         \begin{align*}
&-B_0\sup_{x\in\Rn}|u(x+\ell)-v(x)|-B_1|\ell| -A|\nabla(u- v)| +\lambda (u- v)\\
&\hspace{7cm}+ \laplas (u- v) \le 0.
\end{align*} 
      
      \end{proof} 
      
      As an immediate corollary to Lemma \ref{dif:quo:lem}, we have the following important fact. 
      
      \begin{cor}\label{cor:inequalities-difference} Let \ref{A1}-\ref{A4} be satisfied so that the viscosity solution $u$ of \eqref{eq:HJB-master} is Lipschitz continuous. Then there are constants $A$ and $B$ such that, for every given $\ell \in \Rn$, the function $v(x) =u(x+\ell)-u(x)$ satisfies
      
      \begin{align*}
      -A|\grad v| -B|\ell | + \lambda v + \laplas v \leq 0,\\
      A|\grad v| +B|\ell |  + \lambda v + \laplas v \geq 0
      \end{align*} in the viscosity sense.

            \end{cor}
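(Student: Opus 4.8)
The plan is to obtain the two inequalities directly from Lemma~\ref{dif:quo:lem} by a translation trick, and then to convert the ``non-local'' term $B_0\sup_{x}|u(x+\ell)-u(x)|$ that appears there into a multiple of $|\ell|$ using the a priori Lipschitz estimate of Theorem~\ref{thm:lipschitz}. Throughout, $u$ denotes the Lipschitz continuous viscosity solution of \eqref{eq:HJB-master}.

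First I would check that the translated function $u(\cdot+\ell)$ meets the hypotheses placed on the first function in Lemma~\ref{dif:quo:lem}. Since $u$ solves \eqref{eq:HJB-master}, it is simultaneously a viscosity sub- and supersolution, so it suffices to translate the defining inequalities: given $\varphi\in C^2(\Rn)$ with $u(\cdot+\ell)-\varphi$ attaining a global maximum at $x_0$, the function $u-\varphi(\cdot-\ell)$ attains a global maximum at $x_0+\ell$, and since the gradient and $\laplas$ commute with translations, the subsolution inequality for $u$ at $x_0+\ell$ becomes, after renaming $x_0$ as $x$,
\[
 \sup_{\A\in\SA}\inf_{\B\in\SB}\Bigl\{f^{\A,\B}(x+\ell)+c^{\A,\B}(x+\ell)u(x)+b^{\A,\B}(x+\ell)\cdot\grad u(x)+a^{\A,\B}(x+\ell)\laplas u(x)\Bigr\}\le 0
\]
in the viscosity sense; the same argument applied to the supersolution property of $u$ yields the corresponding $\ge 0$ inequality. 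Hence $u(\cdot+\ell)$ is an admissible first function in both halves of Lemma~\ref{dif:quo:lem}, while $u$ itself, being a solution, is an admissible second function in both halves.

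With this in hand I would simply read off the conclusion. Writing $v(x):=u(x+\ell)-u(x)$, the first half of Lemma~\ref{dif:quo:lem} gives \eqref{eq:diff-ineq-1} for $v$, and the second half gives \eqref{eq:diff-ineq-2} for $v$. Finally, Theorem~\ref{thm:lipschitz} supplies a constant $L$, independent of $\ell$, with $\sup_{x\in\Rn}|u(x+\ell)-u(x)|\le L|\ell|$; substituting this into \eqref{eq:diff-ineq-1}--\eqref{eq:diff-ineq-2} and setting $B:=B_1+B_0L$ produces exactly the two asserted inequalities, with $A$ and $B$ depending only on the data in \ref{A1}--\ref{A4}.

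Since this is a corollary there is no genuinely hard step; the one point deserving care is the translation of the viscosity test functions, where one must keep track of the fact that the coefficients $f^{\A,\B},c^{\A,\B},b^{\A,\B},a^{\A,\B}$ get evaluated at the shifted point $x+\ell$ while the gradient and the $\frac12$-Laplacian are those of the shifted function $u(\cdot+\ell)$. It is also worth noting explicitly that the constants $A,B$ are uniform in $\ell$: the constants produced by Lemma~\ref{dif:quo:lem} do not depend on $\ell$, and $L$ is the fixed global Lipschitz constant of $u$ guaranteed by Theorem~\ref{thm:lipschitz}.
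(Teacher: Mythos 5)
Your derivation is correct and is the natural (indeed the only) way to obtain the corollary from Lemma~\ref{dif:quo:lem}; the paper itself treats the statement as immediate and records no proof. The translation argument you carry out for the test functions, together with the observation that $\nabla$ and $\laplas$ commute with translations, is precisely what makes $u(\cdot+\ell)$ an admissible first function in the lemma, and absorbing the $\sup$-term via Theorem~\ref{thm:lipschitz} is exactly what turns it into a multiple of $|\ell|$.

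One small bookkeeping remark: reading the conclusion of Lemma~\ref{dif:quo:lem} literally, with $u(\cdot+\ell)$ as the first function and $u$ as the second, the quantity $\sup_{x}|u(x+\ell)-v(x)|$ becomes $\sup_{x}|u(x+2\ell)-u(x)|\le 2L|\ell|$, so your constant should be $B=B_1+2B_0L$ rather than $B_1+B_0L$. This factor of two is irrelevant to the statement; and in fact the extra shift in the lemma as printed is almost certainly a typo, since the computation in Step~3 of its proof produces $\sup_{x}|u(x)-v(x)|$, in which case your bound $L|\ell|$ is exactly what was intended.
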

       Note that if $u$ is the unique Lipschitz continuous viscosity solution of \eqref{eq:HJB-master}, then for $\ell \in S^{n-1}$ and $h>0$, the finite difference quotients
   
     $$\partial_{h, \ell} u(x) = \frac{u(x+h\ell)-u(x)}{h}$$
  are bounded and continuous functions. Moreover, by Corollary \ref{cor:inequalities-difference}, the difference quotient $v(x)=\partial_{h, \ell} u(x)$ satisfies 
  \begin{align}
 \label{eq:ineq-sub}  -A|\grad v| -B + \lambda v + \laplas v \leq 0,\\
 \label{eq:ineq-super}     A|\grad v| +B + \lambda v + \laplas v \geq 0
  \end{align} in the viscosity sense, where the constants $A$ and $B$ are independent of $h$ and $\ell$. 
      
        Our method requires introduction of an auxiliary time variable and convert \eqref{eq:ineq-sub} and \eqref{eq:ineq-super} into time dependent inequalities. This would enable us to use the machinery from \cite{Biswas2013, silvestre:2012} and prove H\"{o}lder continuity of $v$ in  \eqref{eq:ineq-sub} and \eqref{eq:ineq-super} . 
      
       \begin{lem}\label{time-conversion}
   Let $u$ be a bounded continuous function that satisfies 
\begin{align*}
-A|\grad u(x)| -B + \lambda u(x) + \laplas u(x) \leq 0 , \\
A|\grad u(x)| +B + \lambda u(x) + \laplas u(x) \geq 0 
\end{align*}
in the viscosity sense. Then, for $(t,x)\in (-\infty, 0]\times \R^n$, the function $v(t,x)= e^{\lambda t} u(x)$
satisfies the 
inequalities
\begin{align}\label{dif:quo1}
\partial_t v(t,x)-A|\grad v(t,x)| + \laplas v(t,x)-B \leq 0,\\
\label{dif:quo11}
\partial_t v(t,x)+A|\grad v(t,x)|+ \laplas v(t,x)+B \geq 0
\end{align} in the viscosity sense in $(-\infty, 0]\times \R^n$.

  \end{lem}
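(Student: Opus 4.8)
The plan is to read off both parabolic inequalities directly from the definition of a viscosity solution, using the elementary observation that conjugation by the strictly positive factor $e^{\lambda t}$ turns the zeroth–order term $\lambda u$ into the time derivative $\partial_t v$, while the two spatial operators $p\mapsto|p|$ and $\laplas$, being positively $1$–homogeneous and (for the latter) linear, merely reproduce the factor $e^{\lambda t}$. Since $u$ is bounded and continuous and $e^{\lambda t}\le 1$ on $(-\infty,0]$, the function $v(t,x)=e^{\lambda t}u(x)$ is a bounded continuous object to which the parabolic viscosity framework applies. I will spell out the subsolution inequality \eqref{dif:quo1}; the supersolution inequality \eqref{dif:quo11} follows by the same computation with all signs reversed.

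Let $\phi$ be an admissible test function (smooth near the contact point, with the decay/boundedness needed to make sense of the nonlocal term, in accordance with Definition \ref{defi:viscosolution}) such that $v-\phi$ attains a global maximum over $(-\infty,0]\times\Rn$ at a point $(t_0,x_0)$ with $t_0\le 0$, normalized so that $v(t_0,x_0)=\phi(t_0,x_0)$. Freezing the time variable at $t_0$ and writing $v(t,x)=e^{\lambda t}u(x)$, the spatial function $x\mapsto u(x)-\psi(x)$, with $\psi(x):=e^{-\lambda t_0}\phi(t_0,x)$, has a global maximum at $x_0$ and $u(x_0)=\psi(x_0)$. Testing the hypothesis on $u$ with $\psi$ (and, in Definition \ref{defi:viscosolution}, evaluating the nonlocal part as $\Iopsub(\psi)(x_0)+\Iopsup(u)(x_0)$) gives
\[
-A\,|\nabla\psi(x_0)|-B+\lambda u(x_0)+\laplas\psi(x_0)\le 0 .
\]
At the contact point one has the scaling identities $\nabla_x\phi(t_0,x_0)=e^{\lambda t_0}\nabla\psi(x_0)$, $\laplas\phi(t_0,x_0)=e^{\lambda t_0}\laplas\psi(x_0)$ (equivalently, in the split form, $\Iopsub(\phi(t_0,\cdot))(x_0)+\Iopsup(v(t_0,\cdot))(x_0)=e^{\lambda t_0}\big(\Iopsub(\psi)(x_0)+\Iopsup(u)(x_0)\big)$, since $v(t_0,\cdot)=e^{\lambda t_0}u$), together with $\partial_t\phi(t_0,x_0)\le\partial_t v(t_0,x_0)=\lambda e^{\lambda t_0}u(x_0)$, where equality holds when $t_0<0$ and the one–sided inequality comes from the left endpoint of the time interval when $t_0=0$. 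Combining,
\begin{align*}
\partial_t\phi(t_0,x_0)-A\,|\nabla_x\phi(t_0,x_0)|+\laplas\phi(t_0,x_0)-B
&\le e^{\lambda t_0}\big(\lambda u(x_0)-A\,|\nabla\psi(x_0)|+\laplas\psi(x_0)\big)-B\\
&\le e^{\lambda t_0}B-B\ \le\ 0,
\end{align*}
the last step using $t_0\le 0$ and $\lambda>0$; this is precisely \eqref{dif:quo1} at $(t_0,x_0)$. For \eqref{dif:quo11} one tests a global minimum of $v-\phi$, obtains $\partial_t\phi(t_0,x_0)\ge\lambda e^{\lambda t_0}u(x_0)$, invokes the supersolution inequality for $u$, and the estimate closes because now the leftover term is $B\big(1-e^{\lambda t_0}\big)\ge 0$.

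I do not expect a genuine obstacle; the two places that require a little care are (i) the bookkeeping of the nonlocal term: in Definition \ref{defi:viscosolution} the operator $\Iopsup$ must be evaluated on the actual function ($u$ for the stationary problem, $v(t_0,\cdot)$ for the parabolic one) and $\Iopsub$ on the test function, and it is exactly this split that makes the scaling by $e^{\lambda t_0}$ legitimate and independent of $\kappa$; and (ii) the terminal time $t_0=0$, at which only a one–sided bound $\partial_t\phi(0,x_0)\le\partial_t v(0,x_0)$ is available, which is, however, the bound in the direction favorable to the subsolution test (and symmetrically $\partial_t\phi(0,x_0)\ge\partial_t v(0,x_0)$ for the supersolution test).
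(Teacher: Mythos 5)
Your proof is correct and follows essentially the same route as the paper: define $\psi(x)=e^{-\lambda t_0}\varphi(t_0,x)$, observe that $u-\psi$ has a global maximum at $x_0$, invoke the subsolution inequality for $u$, rescale by $e^{\lambda t_0}$, and substitute the time derivative relation $\partial_t\varphi(t_0,x_0)\le\lambda e^{\lambda t_0}u(x_0)$ together with $e^{\lambda t_0}\le 1$ to absorb the constant $B$. The only difference is that you are slightly more careful than the paper at the boundary time $t_0=0$, replacing the paper's asserted equality $\partial_t\varphi=\lambda e^{\lambda t_0}u$ by the one-sided inequality that actually holds there and noting it points in the favorable direction, and you spell out the $\mathcal{I}_\kappa/\mathcal{I}^\kappa$ bookkeeping explicitly.
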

  \begin{proof}
  Let $\varphi\in C^{1,2}_b\big((-\infty,0]\times \Rn\big)$ be a test function such that $v(t,x)-\varphi(t,x)$ has a global maximum at $(t_0, x_0)$.
    Define $\psi(x)= e^{-\lambda t_0} \varphi(t_0, x)  $. Then 
    
    $$ u(x) -\psi(x) = e^{-\lambda t_0} \Big[ v(t_0, x) - \varphi(t_0, x)\Big].$$
    Therefore $u(x)-\psi(x)$ has a global maximum at $x_0$, which implies
    
    \begin{align*}
       0\ge& -A |\nabla \psi(x_0)|-B+\lambda u(x_0)+(-\Delta)^{\frac 12}\psi (x_0)\\
        =& -A e^{-\lambda t_0} |\nabla \varphi(t_0, x_0)| + e^{-\lambda t_0} (-\Delta)^{\frac 12}\varphi (t_0,x_0) + \lambda u(x_0)-B.
    \end{align*} In other words, we have  
    \begin{align}
  \label{eq:conversion-2}   -A |\nabla \varphi(t_0, x_0)| + (-\Delta)^{\frac 12}\varphi (t_0,x_0) + \lambda  e^{\lambda t_0} u(x_0)-B e^{\lambda t_0}\le 0.
    \end{align} Note that the function $e^{\lambda t} u(x_0) -\varphi(t, x_0)$ has global maximum at $t= t_0$, and hence $\partial_t \varphi(t_0, x_0) = \lambda e^{\lambda t_0} u(x_0)$. We now use this information in \eqref{eq:conversion-2} and obtain
    
     \begin{align}
\notag \partial_t \varphi(t_0, x_0)  -A |\nabla \varphi(t_0, x_0)| + (-\Delta)^{\frac 12}\varphi (t_0,x_0) -B\le 0.
    \end{align} This establishes that $v(t,x)$ satisfies \eqref{dif:quo1} in the viscosity sense. The proof that $v$ satisfies \eqref{dif:quo11} is similar.

  \end{proof}


 
  \section{Diminishing oscillation }\label{oscillation} 
  
  Note that proving H\"{o}lder continuity of $v$ in  \eqref{eq:ineq-sub}-\eqref{eq:ineq-super} is equivalent to proving the H\"{o}lder continuity of $v$ in \eqref{dif:quo1}-\eqref{dif:quo11}. We do this by using method of diminishing oscillation from \cite{silvestre:2010}, and the necessary details are provided below.  
  
  \begin{prop}\label{thm:diminshing_oscillation-1}
Let $u$ be an upper semicontinuous function such that $u\le 1$ in $[-2,0]\times \R^n$ .  Also, $u$ satisfies
\begin{align}
\label{eq:osc_sub_sol}
u_t - A |\nabla u| + (-\Delta)^{\frac{1}{2}} u \le \vartheta_0,
\end{align} interpreted in the viscosity sense, in $[-2,0]\times B_{2+2A}$. Assume further that there is a $\mu > 0$ such that
\begin{align*}
|\{u \le 0\}\cap [-2,-1]\times B_1| \ge \mu.
\end{align*}
 Then, for sufficiently small $\vartheta_0$, there is a $\theta \in (0,1)$  such that $ u\le 1-\theta $ in $[-1,0]\times B_1$. (The maximal value of $\theta$ depends on $A,~ \mu$ and $n$.)
\end{prop}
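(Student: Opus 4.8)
The cleanest way I would proceed is to pass to $w:=1-u$. Since $(-\Delta)^{1/2}$ annihilates constants and $|\nabla w|=|\nabla u|$, the hypothesis on $u$ translates into: $w$ is a nonnegative lower semicontinuous viscosity supersolution of
\[ w_t + A|\nabla w| + (-\Delta)^{1/2} w \ \ge\ -\vartheta_0 \qquad\text{in }\ [-2,0]\times B_{2+2A}, \]
with $w\ge 0$ on all of $[-2,0]\times\R^n$, and with $|\{w\ge 1\}\cap([-2,-1]\times B_1)|\ge\mu$; the desired conclusion becomes $w\ge\theta$ on $[-1,0]\times B_1$. Thus the statement is a nonlocal parabolic ``measure-to-infimum'' (growth) lemma, and I would follow the diminishing-oscillation scheme of Silvestre \cite{silvestre:2010, silvestre:2012}, which is the parabolic counterpart of the Caffarelli--Silvestre point estimate \cite{cafarelli:2009}.

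\textbf{Main steps.} First I would extract, from the measure hypothesis, a single good time. By Fubini, $\mu$ bounds from below the time-integral over $[-2,-1]$ of the slice measures $|\{w(t,\cdot)\ge 1\}\cap B_1|$, and since each slice measure is at most $|B_1|$, the portion of the mass carried by the thin slab $[-1-\tfrac{\mu}{2|B_1|},-1]$ is at most $\mu/2$; hence there is a time $t_*\le -1-\tfrac{\mu}{2|B_1|}$ with $|\{w(t_*,\cdot)\ge1\}\cap B_1|\ge\mu/2$, and in particular $\int_{B_1}w(t_*,x)\,dx\ge\mu/2$. Second, I would build a barrier: a subsolution $\underline w$ of the ``$+A|\nabla\cdot|$'' drift--diffusion equation with right-hand side $-\vartheta_0$ on $[t_*,0]\times B_{2+2A}$, equal to $0$ outside $B_{2+2A}$ for all times, with initial datum at $t_*$ a fixed smooth bump $\zeta$ supported well inside $B_1$ and satisfying $0\le\zeta\le w(t_*,\cdot)$ and $\int\zeta\ge c(\mu,n)>0$ (such a $\zeta$ exists because $\int_{B_1}w(t_*,\cdot)\ge\mu/2$ and $w\ge 0$, after truncating and mollifying from below at a scale depending only on $\mu,n$). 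Third, the parabolic nonlocal comparison principle — the doubling-of-variables machinery behind Theorem \ref{thm:lipschitz} and Lemma \ref{comparison} — gives $\underline w\le w$ on $[t_*,0]\times B_{2+2A}$, hence on $[-1,0]\times B_1$. Fourth, one shows $\underline w\ge\theta(\mu,A,n)>0$ on $[-1,0]\times B_1$: the fractional heat kernel has full support, so the Dirichlet kernel of $B_{2+2A}$ is bounded below by a positive constant on $B_1$ for elapsed times in the fixed range $[\tfrac{\mu}{2|B_1|},\,2]$, which spreads the mass $c(\mu,n)$ into a definite positive value on $B_1$, while the drift term $A|\nabla\cdot|$ (which can erode positivity at speed at most $A$, whence the buffer radius $2+2A$) and the forcing $-\vartheta_0$ degrade this by controllable amounts provided $\vartheta_0$ is small enough; here one either verifies this directly for $\underline w$ or, equivalently, replaces $\underline w$ by an explicit hand-crafted subsolution of the full drift--diffusion operator whose positivity on $[-1,0]\times B_1$ is read off from its profile. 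Combining the four steps, $w\ge\theta$ on $[-1,0]\times B_1$, i.e. $u\le 1-\theta$, with $\theta=\theta(A,\mu,n)$.

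\textbf{Main obstacle.} The delicate point is precisely the fourth step: producing an \emph{explicit} barrier that is genuinely a subsolution of the full operator $\,\cdot_t+A|\nabla\cdot|+(-\Delta)^{1/2}\cdot\,$ (with the unfavorable sign in front of the gradient term) while carrying a quantitative positive lower bound on $[-1,0]\times B_1$ for the given initial mass. This forces a careful computation of $(-\Delta)^{1/2}$ of the chosen profile and a balancing of the competing effects of the critical-order diffusion and the first-order drift, and it is exactly here that the drift-loaded, non-translation-invariant structure of \eqref{eq:HJB-master} makes the argument harder than the translation-invariant fractional-heat case; it is also why the estimate is posed on the enlarged cylinder $[-2,0]\times B_{2+2A}$ rather than on $[-2,0]\times B_2$. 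The remaining ingredients — the Fubini extraction of $t_*$, the comparison principle, and tracking the dependence of $\theta$ and of the admissible size of $\vartheta_0$ on $A,\mu,n$ — are routine.
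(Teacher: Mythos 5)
The paper does not actually prove Proposition~\ref{thm:diminshing_oscillation-1}: it simply cites \cite{Biswas2013} and \cite{silvestre:2010} for a ``rigorously detailed'' proof. So there is no in-paper argument to compare you against; what can be said is whether your outline is a faithful and workable rendering of the argument those references carry out. At the level of strategy, it is: the reformulation $w=1-u$, the Fubini extraction of a good slice $t_*\le -1-\tfrac{\mu}{2|B_1|}$ with $|\{w(t_*,\cdot)\ge 1\}\cap B_1|\ge\mu/2$, the use of the long-range tails of the $1/2$-fractional kernel to spread that mass into a definite infimum on $B_1$, and the accounting of the drift via a buffer radius of order $1+2A$ are all the right ingredients, and the conclusion $u\le 1-\theta$ with $\theta=\theta(A,\mu,n)$ is exactly what is wanted.

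Two caveats are worth flagging. First, you correctly identify the construction of the explicit subsolution barrier as the ``main obstacle'', but that step is essentially the entire content of the lemma, and leaving it as ``either verify directly or replace by a hand-crafted subsolution'' is where the argument remains unfinished. Be aware also that the cited proofs do not quite run a sub/supersolution comparison with a globally defined barrier evolved from $t_*$; they organize the estimate around a carefully chosen bump/test function (a time-dependent profile that is made to touch $u$ from above) and extract the quantitative gain directly from the $\mathcal{I}^\kappa$ part of the viscosity inequality evaluated at the touching point, with the measure hypothesis entering through the far-field integral. Your comparison-principle route is plausible but is not literally what the literature does, and in particular you would need to justify the comparison on the bounded domain $B_{2+2A}$ for the Bellman operator $v\mapsto v_t+A|\nabla v|+(-\Delta)^{1/2}v$ with the barrier set to zero outside. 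Second, the step where you manufacture a smooth $\zeta$ with $0\le\zeta\le w(t_*,\cdot)$ and $\int\zeta\ge c(\mu,n)$ by ``truncating and mollifying from below'' is delicate: $w(t_*,\cdot)$ is only lower semicontinuous and $\{w(t_*,\cdot)\ge 1\}\cap B_1$ may be an irregular set with no interior, so a pointwise-dominating smooth bump of definite mass need not exist. The standard way around this is to feed the measure condition directly into the nonlocal term (or into the kernel representation of the evolved solution) rather than to insist on a smooth pointwise minorant at time $t_*$.
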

     \begin{proof}
         This proposition is the backbone of our method, and the proof is readily available in the literature. The interested reader can consult \cite{Biswas2013} or \cite{silvestre:2010}, where the proof is rigorously detailed. 
     \end{proof}
  
  We now apply Proposition \ref{thm:diminshing_oscillation-1} to establish the most crucial technical assertion of this paper. 

 \begin{theorem}[diminishing oscillation]\label{thm:diminishing_oscilation}
 Let $\xi$ be a bounded continuous  function satisfying the inequalities
 \begin{align*}
  & \xi_t -A|\nabla \xi | + (-\Delta)^{\frac{1}{2}} \xi \le \varepsilon\\
   &\xi_t + A|\nabla \xi|+(-\Delta)^{\frac{1}{2}} \xi \ge -\varepsilon
 \end{align*} in the viscosity sense in $Q_1 = [-1,0]\times B_1$. There are universal constants $\theta \in (0,1)$, $\alpha_0 >0$ and $\varepsilon_0$ (depending only on $A$ and $n$) such that if $\varepsilon\le \varepsilon_0 $ and 
 \begin{align*}
 |\xi| &\le 1\quad &\text{in} \quad Q_1 = [-1,0]\times B_1,\\
|\xi| &\le 2 |(4+4A)x|^\alpha -1 \quad &\text{in}\quad  [-1,0]\times B_1^c,
 \end{align*} for $\alpha \le \alpha_0$, then
 \begin{align*}
\underset{Q_{1/(4+4A)}}{\mbox{osc}}\xi \le 2(1-\theta).
 \end{align*}
 \end{theorem}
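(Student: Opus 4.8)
The plan is to derive the diminishing-oscillation estimate for $\xi$ from Proposition \ref{thm:diminshing_oscillation-1} by a rescaling/normalization argument combined with a dichotomy. First I would rescale: set $v(t,x) = \xi\big(\tfrac{t}{4+4A},\tfrac{x}{4+4A}\big)$, so that $v$ is defined and bounded by $1$ on $Q_1$ in the new coordinates, the outer growth condition $|\xi|\le 2|(4+4A)x|^\alpha - 1$ becomes $|v|\le 2|x|^\alpha - 1$ on $[-1,0]\times B_1^c$, and $v$ satisfies the rescaled inequalities
\begin{align*}
v_t - \tfrac{A}{4+4A}|\nabla v| + (-\Delta)^{1/2} v &\le \tfrac{\varepsilon}{4+4A},\\
v_t + \tfrac{A}{4+4A}|\nabla v| + (-\Delta)^{1/2} v &\ge -\tfrac{\varepsilon}{4+4A},
\end{align*}
in the rescaled cylinder, where I have used that $(-\Delta)^{1/2}$ has scaling order one and that the drift coefficient only decreases under this zoom. (I will need to be slightly careful here: the region of validity of the original inequalities is $Q_1$, and the rescaling pushes the inequalities for $v$ onto a smaller cylinder, but since $A/(4+4A)<A$, the enlarged cylinder $[-2,0]\times B_{2+2A}$ required by Proposition \ref{thm:diminshing_oscillation-1} with drift constant $A$ can be covered; I may instead keep the drift $A$ throughout and simply absorb the harmless factor, or rescale by a factor related to $A$ more explicitly.)

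Next I would run the dichotomy on the measure of the sub-level set. Consider the set $\{v\le 0\}\cap [-1,-1/2]\times B_{1/2}$ (or an analogous fixed sub-cylinder). Either $|\{v\le 0\}\cap[-1,-1/2]\times B_{1/2}|\ge \mu$ for a fixed $\mu$, in which case Proposition \ref{thm:diminshing_oscillation-1} applied to $u=v$ (after translating the sub-cylinder to the standard position $[-2,-1]\times B_1$ via another harmless scaling, and checking that the outer growth $|v|\le 2|x|^\alpha-1$ furnishes the needed control of the tail contribution to $(-\Delta)^{1/2}$ — this is where $\alpha\le\alpha_0$ enters, to keep the nonlocal tail term smaller than the spare room in $\vartheta_0$) yields $v\le 1-\theta$ on a smaller cylinder; or the complementary set $\{v\ge 0\}$ occupies at least the complementary measure, and then applying the same proposition to $-v$ (which satisfies the companion supersolution inequality with the same structure) yields $v\ge -1+\theta$ on that cylinder. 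In either branch, combining with the a priori bound $|v|\le 1$ gives $\mathrm{osc}\, v \le 2(1-\theta)$ on $Q_{1/(4+4A)}$ after undoing the rescaling; the constants $\theta,\alpha_0,\varepsilon_0$ depend only on $A$ and $n$, matching the claimed universality.

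The main obstacle I anticipate is the control of the nonlocal tail. Proposition \ref{thm:diminshing_oscillation-1} is stated for $u\le 1$ on all of $[-2,0]\times\R^n$, whereas here $\xi$ is only bounded by $1$ inside $Q_1$ and is allowed to grow like $|(4+4A)x|^\alpha$ outside. To invoke the proposition I must first truncate: replace $\xi$ by $\min(\xi,1)$ (or by an explicit barrier competitor) and estimate the error this introduces in the inequality \eqref{eq:osc_sub_sol}. That error comes entirely from the part of the fractional-Laplacian integral over $B_1^c$, and using $\xi(t,y)\le 2|(4+4A)y|^\alpha-1$ together with $c(n,1/2)\int_{B_1^c}|y|^{\alpha}|y|^{-(n+1)}\,dy <\infty$ for $\alpha<1$, this error is bounded by a constant times $\alpha$ (the integral blows up like $1/(1-\alpha)$ but is multiplied by the smallness coming from $2|y|^\alpha-1$ being close to $1$ near $\partial B_1$ — more precisely one splits into $|y|$ near $1$ and $|y|$ large). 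Choosing $\alpha_0$ and $\varepsilon_0$ small enough that this tail error plus $\varepsilon$ stays below the threshold $\vartheta_0$ of Proposition \ref{thm:diminshing_oscillation-1} closes the argument. The bookkeeping of these constants, and making sure the truncated function still satisfies the viscosity inequality up to the controlled error, is the technical heart of the proof; everything else is routine rescaling.
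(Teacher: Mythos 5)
Your proposal follows essentially the same route as the paper: rescale by $R=4+4A$, truncate with $u=\min(\tilde{\xi},1)$, estimate the resulting tail error in $\mathcal{I}^\kappa u - \mathcal{I}^\kappa\tilde{\xi}$ using the algebraic growth bound and shrink $\alpha$ until that error plus $\varepsilon/R$ falls below the threshold $\vartheta_0$, and use the dichotomy on the measure of $\{\tilde{\xi}\le 0\}$ versus $\{\tilde{\xi}\ge 0\}$ to invoke Proposition \ref{thm:diminshing_oscillation-1} for $\tilde{\xi}$ or $-\tilde{\xi}$. One small correction to your bookkeeping: under the rescaling $(t,x)\mapsto(t/R,x/R)$ the operators $\partial_t$, $|\nabla\cdot|$, and $(-\Delta)^{1/2}$ all scale by the \emph{same} factor $1/R$, so the drift coefficient remains $A$ (not $A/R$); the whole point of choosing $R=4+4A>2+2A$ is that the rescaled cylinder $[-R,0]\times B_R$ already covers $[-2,0]\times B_{2+2A}$ where Proposition \ref{thm:diminshing_oscillation-1} needs the inequality, so no further sub-cylinder translation is required.
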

 \begin{proof} First, we consider the rescaled version of $\xi$ as
    \begin{align*}
       \tilde{\xi} = \xi\big(t/R, x/R\big),\quad \text{where}\quad R = 4+4A. 
    \end{align*} Clearly, $\tilde{\xi}$ satisfies the inequalities 
     \begin{align*}
  &\tilde{\xi}_t -A|\nabla \tilde{\xi} | + (-\Delta)^{\frac{1}{2}} \tilde{\xi}\le \frac{\varepsilon}{R}\\
    &\tilde{\xi}_t +A|\nabla \tilde{\xi} | + (-\Delta)^{\frac{1}{2}} \tilde{\xi}\ge - \frac{\varepsilon}{R},
 \end{align*} in the viscosity sense. Furthermore, one must have either  $ |\{\tilde{\xi} \le 0\} \cap ([-2,-1]\times B_1)| \ge \frac{|B_1|}{2}$ or $  |\{\tilde{\xi} \ge 0\} \cap ([-2,-1]\times B_1)| \ge \frac{|B_1|}{2}$. We simply assume the former or else we estimate the oscillation of $-\tilde{\xi}$ instead. At this point, if Proposition \ref{thm:diminshing_oscillation-1} was directly applicable to $\tilde{\xi}$ then we will have 
\begin{align*}
  \sup_{(t,x)\in Q_1} \tilde{\xi}(t,x) \le 1-\theta,
\end{align*} which implies $\underset{Q_{1/R}}{\mbox{osc}}\, \xi \le 2(1-\frac{\theta}{2})$, thereby proving the theorem. However, the condition that $\tilde{\xi}$ needed to be bounded above by $1$ is unclear.  To this end, we simply define $u=\min(1,\tilde{\xi})$ and estimate the inequalities satisfied by $u$ instead. 

 Clearly, $\tilde{\xi} \le 1$ in $Q_R$ and $\tilde{\xi} =  u$ in $Q_R$. For a given test function $\varphi$, let $u-\varphi$ has a global maximum at $(t,x)\in [-2,0]\times B_{2+2A}$ and $u(t,x)= \varphi(t,x)$. Keep in mind that $(t,x)$ may not be a point of global maximum for $\tilde{\xi}-\varphi$, but we modify $\varphi$ outside $[-(2+2A),0]\times B_{3+2A}(x)$ so that $(t,x)$ becomes a point of global maximum for $\tilde{\xi}-\varphi$. Therefore, we have
 \begin{align}
\label{eq:osci-measure} \partial_t \varphi(t,x) -A|\nabla \varphi(t,x)| +\mathcal{I}_\kappa \varphi(t,x) +\mathcal{I}^\kappa \tilde{\xi}(t,x) \le \frac{\varepsilon}{R}.
  \end{align} At this point choose $\varepsilon_0 < R\, \vartheta_0 $. It is easy to see that, for every $\kappa\in (0,1)$
  
  \begin{align*}
     \mathcal{I}^\kappa u(t,x)-  \mathcal{I}^\kappa\tilde{\xi}(t,x) \le & c(n,1/2)\int_{x+y\in B_{R}^c}\big(\tilde{\xi}(t,x+y) -\min(1, \tilde{\xi}(t,x+y))\big)\, \frac{dy}{|y|^{n+1}}\\
     &  =c(n,1/2)\int_{x+y\in B_{R}^c}\big(\tilde{\xi}(t,x+y) -1\big)^+\, \frac{dy}{|y|^{n+1}}\\
     & \le C \int_{x+y\in B_{R}^c}2 \big(|R^2 (x+y)|^\alpha -1\big)\, \frac{dy}{|y|^{n+1}}\\
     & \le \vartheta_0 -\frac{\varepsilon}{R}
  \end{align*} if $\alpha$ is chosen to be small enough. Note that the choice of $\alpha$ is independent of $\kappa$ and $(t,x)$.  Therefore, we now combine this with \eqref{eq:osci-measure} and conclude
   \begin{align}
\notag \partial_t \varphi(t,x) -A|\nabla \varphi(t,x)| +\mathcal{I}_\kappa \varphi(t,x) +\mathcal{I}^\kappa u(t,x) \le\vartheta_0.
  \end{align} In other words, Proposition \ref{thm:diminshing_oscillation-1} applies to $u$ and hence the proof follows.

 \end{proof}

  \section{Regularity estimate: the endgame}\label{holder-regularity}
 We now apply Theorem \ref{thm:diminishing_oscilation} to prove the all-important H\"{o}lder continuity result for the difference quotients. 
  
 \begin{prop}\label{regularity-derivative}
Let $u$ be a bounded continuous function on $\Rn$ that satisfies 
\begin{align} \label{dif:quo}
-A|\grad u| -B + \lambda u + \laplas u \leq 0, \\
A|\grad u| +B  + \lambda u + \laplas u \geq 0 \label{dif:quotient}
\end{align}
in the viscosity sense. Then there exists a positive constant $\sigma \in (0,1)$ such that $u$ is H\"{o}lder continuous of
 exponent $\sigma$. Moreover, there are constants $C$ and $K$(depending on $A$ ,$B$ and $\lambda$) such that 
\begin{align*}
|u(x)- u(y)| \leq C(||u||_{L^{\infty}}+K) |x-y|^{\sigma}  \quad \text{for all}\quad x,y\in \Rn.
\end{align*} 
\end{prop}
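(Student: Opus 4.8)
The plan is to reduce the statement of Proposition \ref{regularity-derivative} to a direct application of Lemma \ref{time-conversion} followed by an iteration of Theorem \ref{thm:diminishing_oscilation}. First I would invoke Lemma \ref{time-conversion}: given $u$ satisfying \eqref{dif:quo}--\eqref{dif:quotient} in the viscosity sense, the function $w(t,x) = e^{\lambda t} u(x)$ satisfies, for $(t,x) \in (-\infty,0] \times \R^n$,
\begin{align*}
\partial_t w - A|\nabla w| + \laplas w - B &\le 0,\\
\partial_t w + A|\nabla w| + \laplas w + B &\ge 0
\end{align*}
in the viscosity sense. This removes the zeroth-order term at the cost of introducing a parabolic structure, which is exactly the setting of Theorem \ref{thm:diminishing_oscilation} (after a harmless rescaling to absorb the constant $B$ into the right-hand side $\varepsilon$, or equivalently by noting that $B$ plays the role of $\vartheta_0$ at the appropriate scale).

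The core of the proof is then an oscillation-decay iteration. Fix a point, say the origin, and consider the rescaled functions $w_k(t,x) = \rho^{-k\sigma} \big( w(R^{-k} t, R^{-k} x) - a_k \big)$ where $R = 4 + 4A$, $\rho = 1 - \theta$ with $\theta$ as in Theorem \ref{thm:diminishing_oscilation}, and $\sigma$ chosen so that $\rho = R^{-\sigma}$, i.e. $\sigma = -\log(1-\theta)/\log R$, taking $\sigma$ smaller if needed so that $\sigma \le \alpha_0$. The constants $a_k$ are the recentering constants. The claim, proved by induction on $k$, is that each $w_k$ satisfies the hypotheses of Theorem \ref{thm:diminishing_oscilation} on $Q_1$: the polynomial growth bound $|w_k| \le 2|(4+4A)x|^\alpha - 1$ on $[-1,0] \times B_1^c$ holds because the rescaling is precisely tuned to the growth exponent $\alpha = \sigma$, and the $L^\infty$ bound $|w_k| \le 1$ on $Q_1$ is the inductive hypothesis. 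One must also check the right-hand side of the inequalities rescales favorably: under $(t,x) \mapsto (R^{-k}t, R^{-k}x)$ the constant $B$ picks up a factor $R^{-k} \rho^{-k\sigma}$ in front — here is the one delicate point, and it works because $R^{-1}\rho^{-\sigma} = R^{-1} R^{\sigma^2} < 1$ for $\sigma$ small, so the forcing term actually \emph{decreases} along the iteration and remains below $\varepsilon_0$. Applying Theorem \ref{thm:diminishing_oscilation} to $w_k$ gives $\mathrm{osc}_{Q_{1/R}} w_k \le 2(1-\theta)$, which after unwinding the scaling is exactly the statement that $\mathrm{osc}_{Q_{R^{-(k+1)}}} w \le 2 \rho^{k+1} \|w\|$-type bound, closing the induction and choosing $a_{k+1}$ as the midpoint of the range of $w$ on that cylinder.

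From the geometric decay $\mathrm{osc}_{Q_r}(w) \le C r^\sigma$ for all small $r$ (with $C$ proportional to $\|w\|_{L^\infty} + B/\lambda$ or a similar combination, tracking where $B$ enters), one obtains $C^{0,\sigma}$ regularity of $w$ at the origin in the spatial variable; since the origin was arbitrary and the constants are uniform, this gives $|w(0,x) - w(0,y)| \le C(\|w\|_\infty + K)|x-y|^\sigma$. Specializing to $t = 0$ and recalling $w(0,x) = u(x)$, and noting $\|w\|_{L^\infty((-\infty,0]\times\R^n)} = \|u\|_{L^\infty}$, yields the desired estimate for $u$ with $K$ depending only on $A, B, \lambda$.

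The main obstacle I anticipate is the bookkeeping in the inductive step: one must verify simultaneously that (i) the recentered, rescaled function stays in $[-1,1]$ on the unit cylinder, (ii) it obeys the prescribed polynomial growth outside — this forces the compatibility condition between $\rho$, $R$ and the exponent $\alpha = \sigma$, and in particular forces $\sigma$ to be small — and (iii) the inhomogeneous term stays below the threshold $\varepsilon_0$ of Theorem \ref{thm:diminishing_oscilation}, which is why the parabolic rescaling (with its extra factor of $R^{-k}$ from the time derivative, as opposed to a purely elliptic scaling) is essential: it is precisely this extra decay that defeats the potentially growing factor $\rho^{-k\sigma}$ coming from the normalization. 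Once these three conditions are seen to propagate, the rest is the standard passage from discrete oscillation decay to a Hölder modulus of continuity.
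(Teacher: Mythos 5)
Your overall strategy matches the paper's: convert the elliptic inequalities to parabolic ones via Lemma \ref{time-conversion}, then iterate the diminishing-oscillation Theorem \ref{thm:diminishing_oscilation} with a geometric rescaling-and-recentering scheme, using the extra $R^{-k}$ coming from the critical parabolic scaling to make the inhomogeneous term decay along the iteration. You have also correctly identified the compatibility constraints linking the scaling factor $R=4+4A$, the oscillation-decay factor $1-\theta$, the growth exponent $\alpha_0$, and the Hölder exponent $\sigma$.

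There is, however, a genuine gap at the start of the iteration. You assert that the inhomogeneous term ``remains below $\varepsilon_0$,'' but this is only true \emph{once it is already below $\varepsilon_0$}: at $k=0$ the forcing is of size $B$ (or $B$ divided by whatever normalization you apply to $\|u\|_\infty$), and nothing in the hypotheses forces $B\le \varepsilon_0$. The paper resolves this as follows: first it normalizes to $\xi(t,x)=e^{\lambda t}u(x+x_0)/(2\|u\|_\infty+2)$ so that $|\xi|\le 1/2$; then it fixes a preliminary exponent $\sigma_0$ and chooses $k_0$ so large that the rescaled forcing $B r^{(1-\sigma_0)k_0}/2\le\varepsilon_0$; and finally it picks the actual $\sigma<\sigma_0$ small enough that $r^{\sigma k_0}\ge 1/2$, which makes the required oscillation bound $\text{osc}_{Q_{r^k}}\xi\le 2r^{\sigma k}$ \emph{trivially} satisfied for $k=0,\dots,k_0$ (since $\text{osc}\,\xi\le 1\le 2r^{\sigma k_0}$), so that Theorem \ref{thm:diminishing_oscilation} only needs to be applied from $k_0$ onward, where the forcing is small. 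Your proposal, which starts the induction at $k=0$, does not address why the first application of the diminishing-oscillation theorem is legitimate, nor the related base-case issue that $|w_0|\le 1$ on $Q_1$ can fail without normalizing $u$. You flag these as ``obstacles I anticipate,'' but the resolution is not a matter of bookkeeping the propagation of conditions (ii) and (iii); it requires the extra idea of deferring the start of the genuine iteration to a scale-dependent index $k_0$ and covering the initial scales for free by normalization. (Minor additional slips: $\rho^{-k\sigma}$ should presumably be $\rho^{-k}$ to yield exponent $\sigma$ rather than $\sigma^2$, and $R^{-1}\rho^{-\sigma}=R^{\sigma^2-1}<1$ holds for every $\sigma\in(0,1)$, not only for small $\sigma$.)
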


 \begin{proof}[Proof of Proposition  \ref{regularity-derivative}] 

 Fix $x_0\in \Rn$ and define
 \begin{align*}
  \xi(t,x)= \frac{e^{\lambda t}u(x+x_0)}{ 2 ||u||_{L^\infty} + 2}; \quad (t,x)\in [-2,0]\times \Rn;
 \end{align*} where $\lambda$ is the given positive constant. Clearly H\"{o}lder continuity of $\xi$ at $(0,0)$ would mean H\"{o}lder continuity of $u$ at $x_0$.  At this point we recall that $Q_s= [-s,0]\times B_s$ where $s> 0$ and $B_s$ ball of radius $s$ around $0\in \Rn$. Furthermore, fix $r = \frac{1}{4+4A}$.
 
 We intend to establish that $\xi$ is  H\"{older} continuous at $(0,0)$, and we do so by establishing that there is $\sigma \in (0,1)$ such that
 \begin{align}
    \label{eq:holder-conversion}\underset{Q_{r^k}}{\mbox{osc}}\,\xi\, \le 2 r^{\sigma k}\quad \text{for every}~ k\in \mathbb{N}.
 \end{align} Now establishing \eqref{eq:holder-conversion} is equivalent to finding out a sequence of nested intervals $(a_k, b_k)$, k=0, 1,2....; such that for all $(t,x)\in Q_{r^k}$
 \begin{align}
    \label{eq:holder-conversion-1}  a_k \le \xi(t,x) \le b_k \quad\text{and}\quad b_k-a_k =2r^{\sigma k}.
 \end{align} To this end, we first fix $\sigma_0\in (0,1)$ and find $k_0\in \mathbb{N}$ large enough so that $\frac{B\, r^{(1-\sigma_0)k_0}}{2}\le \varepsilon_0$ where $\varepsilon_0$ is given by Theorem \ref{thm:diminishing_oscilation}. We will now find a $\sigma < \sigma_0$ and a nested sequence $(a_k, b_k)$ such that \eqref{eq:holder-conversion-1} folds. We will make our selection in two steps. In the first step, we find $(a_k, b_k)$ for $k= 0, 1,......., k_0$ and the rest in the second step.
 
       \vspace{.3cm}
 \noindent{\bf Step 1.} Choose $\sigma < \sigma_0$ small enough such that $ \frac 12\le r^{\sigma k_0} < 1$.
 Note that $||\xi(t,x)||_{L^\infty} \le \frac 12$, therefore $\underset{Q_{1}}{\mbox{osc}}\,\xi \le 1\le  2 r^{\sigma k_0} $.
 It is enough to find a  $ (a_{k_0}, b_{k_0})$ such that 
 $$b_{k_0}-a_{k_0} = 2 r^{\sigma k_0}\quad \text{and}\quad a_{k_0}\le \xi(t,x)\le b_{k_0} \quad \text{for all}\quad (t,x)\in Q_1.$$ It is now trivial to find out constants $a_0\le a_1\le\cdots\le a_{k_0-1}\le a_{k_0} $ and $b_0\ge b_1\ge  b_2\ge\cdots \ge b_{k_0-1}\ge b_{k_0} $ such that $b_j - a_j = 2 r^{\sigma j}$ for $j = 0, 1,....., k_0$. Clearly, it also holds that $a_j \le \xi(t,x)\le b_j$ for all $(t,x)\in Q_1\supseteq Q_{r^j}$.
 
     \vspace{.2cm}
  \noindent{\bf Step 2.} We now complete our choice of $(a_j, b_j)$ for $j= k_0+1, k_0+2, k_0+ 3,.......$ satisfying \eqref{eq:holder-conversion-1}.  We use method of induction and assume that we already have $(a_j, b_j)$ for $j$ up to some $ k\ge k_0$.  We want to establish the existence of 
  $(a_{k+1}, b_{k+1})$.  At this point, we define 
      $$ \xi_k (t,x) = \Big(\xi(r^k t, r^k x) -\frac{a_k+b_k}{2}\Big) r^{-\sigma k}.$$
      
      Clearly, by Lemma \ref{time-conversion}, $\xi_k$ satisfies 
      
      \begin{align}\label{dif:quo1.1}
\partial_t \xi_k(t,x)-A|\grad \xi_k(t,x)| + \laplas \xi_k(t,x)\le  \frac{B r^{(1-\sigma)k}}{2+2||u||_{\infty}} \le  \frac{B r^{(1-\sigma_0)k_0}}{2} \le \varepsilon_0  ,\\
\label{dif:quo1.2}
\partial_t \xi_k(t,x)+A|\grad \xi_k(t,x)|+ \laplas \xi_k(t,x) \geq - \frac{B r^{(1-\sigma)k}}{2+2||u||_{\infty}} \ge -  \frac{B r^{(1-\sigma_0)k_0}}{2} \ge -\varepsilon_0,
\end{align}in the viscosity sense as $k \ge k_0$ and $\sigma <\sigma_0$. 
  
  \vspace{.2cm}
\noindent{{\bf Claim}}: It holds that 
\begin{align*}
 &|\xi_k(t,x)| \le 1 \quad \text{in}\quad Q_1, 
\end{align*} and 
\begin{align}
\label{eq:holder_oscilllation}|\xi_k(t,x)| \le 2|r^{-1}x|^{\sigma }-1 \quad\quad\text{if}\quad\quad |x| >1.
\end{align}

\noindent{\it  Justification}: The first part is an immediate consequence of the hypothesis on $(a_k, b_k)$. The second part is argued as follows: let $m\in \{1,2........, k\}$ be an integer and $(t,x)\in Q_{r^{-m}}$. Then $(r^k t, r^k x)\in Q_{r^{k-m}}$, and 
\begin{align*}
  \xi_k(t,x) &= \big(\xi(r^k t, r^k x)-\frac{b_k-a_k}{2} -a_k\big)r^{-\sigma k}\\
                  & =  \big(\xi(r^k t, r^k x) -a_k\big)r^{-\sigma k} -1\\
                  & \le  \big(\xi(r^k t, r^k x) -a_{k-m}\big)r^{-\sigma k} -1\\
                  &\le 2 r^{\sigma(k-m)} r^{-\sigma k} -1 = 2r^{-\sigma m}-1.
\end{align*} In addition
   
\begin{align*}
 - \xi_k(t,x) &= -\big(\xi(r^k t, r^k x)-\frac{a_k-b_k}{2} -b_k\big)r^{-\sigma k}\\
                  & =  \big(b_k-\xi(r^k t, r^k x)\big)r^{-\sigma k} -1\\
                  & \le  \big(b_{k-m}-\xi(r^k t, r^k x) \big)r^{-\sigma k} -1\\
                  &\le 2 r^{\sigma(k-m)} r^{-\sigma k} -1 = 2r^{-\sigma m}-1.
\end{align*} Therefore 
\begin{align}
 \label{eq:revision-new-2.1}   | \xi_k(t,x)|\le 2 r^{-\sigma m} -1\quad \text{if}\quad (t,x)\in Q_{r^{-m}}, ~ m\in \{1,2........, k\}.
\end{align}  Moreover, it also holds that 

 \begin{align} 
  \label{eq:revision-new-3.1}  \quad \quad& |\xi_k(t,x)| \le 2r^{-\sigma k}-1 \quad \text{in}\quad [-1,0]\times \R^n.
\end{align} Therefore, from \eqref{eq:revision-new-3.1} we conclude
\begin{align}
\label{eq:holder_oscilllation_1}|\xi_k(t,x)| \le 2|r^{-1}x|^{\sigma }-1 \quad\quad\text{if}\quad\quad |x| \ge r^{-(k-1)}.
\end{align} We now simply  combine \eqref{eq:revision-new-2.1} and \eqref{eq:holder_oscilllation_1} and complete the justification.
\vspace{.2cm}
Therefore, by Theorem \ref{thm:diminishing_oscilation}, there are universal constants $\alpha_0\in (0,1)$ and $\theta\in (0,1)$ (depending on $n, A$) such that if \eqref{eq:holder_oscilllation}
holds for any $0<\sigma < \alpha_0$, then
 \begin{align}
 \label{eq:revision-new-2} \text{osc}_{Q_r} \xi_k \le 2(1-\theta).
 \end{align} We now choose $\sigma \le \min(\alpha_0, \sigma_0)$ such that $(1-\theta) < r^{\sigma}$.  For this choice of $\sigma$, we must have
 \begin{align}
\label{eq:final-estimate} \underset{Q_{r}}{\mbox{osc}}\,\xi_k(t,x) \le 2r^{\sigma}, \quad \text{in other words},\quad  \underset{Q_{r^{k+1}}}{\mbox{osc}}\,\xi(t,x) \le 2r^{(k+1)\sigma}.
 \end{align} Hence the interval $(a_{k+1}, b_{k+1})$ could also be chosen. This completes the proof.

  \end{proof}
 
  \begin{proof}[Proof of Theorem \ref{main-theorem}]
    Let  $u$ be the unique viscosity solution of \eqref{eq:HJB-master} and $\lambda$ in \ref{A3} is large enough so that $u$ is Lipschitz continuous. Fix a unit vector $\ell \in S^{n-1}$ and let  $h> 0$ be a positive constant. The quantity

 \begin{align*}
 \partial_{h,\ell}u(x) = \frac{u(x+h\ell)-u(x)}{h}
 \end{align*} is a difference quotient of $u$ along the vector $\ell$. For every fixed ordered pair $(h,\ell)$, the function $\partial_{h,\ell}u(x)$ is continuous and bounded above by the Lipschitz constant of $u$. Clearly, by Corollary \ref{cor:inequalities-difference}, the function $v= \partial_{h,\ell}u(x)$ satisfies the inequalities \eqref{dif:quo} and \eqref{dif:quotient} in the viscosity sense. In other words, by Proposition \ref{regularity-derivative}, there is $\sigma \in (0,1)$ such that
  \begin{align*}
   ||\partial_{h,\ell}u(x)||_{C^{0,\sigma}( \R^n)} \le C(||\nabla u ||_{L^\infty} + K)
  \end{align*} for every  $h> 0$ and $\ell\in S^{n-1}$. We now apply Arzela-Ascoli's theorem and pass to the limit $h\rightarrow 0$ and conclude that $\partial_{\ell} u(x)$ exists and
   \begin{align*}
   ||\partial_{\ell}u(x)||_{C^{0,\sigma}( \R^n)} \le C(||\nabla u ||_{L^\infty} + K)
  \end{align*} for all unit vectors $\ell\in \R^n $. In other words

    \begin{align*}
   ||\nabla u (x)||_{C^{0, \sigma}( \R^n)} \le C(||\nabla u||_{L^\infty} + K), 
  \end{align*} and this concludes the proof in view of Theorem \ref{thm:lipschitz}.
 \end{proof}

\appendix

\section{Proof of existence and Lipschitz continuity}\label{appendix}

   We begin this section with the proof of existence of viscosity solutions to \eqref{eq:HJB-master}, and as mentioned, the proof is classical and uses Perron's method.  
   
   \begin{proof}[Proof of Theorem \ref{thm:exisence} ]
   
    Note that the functions $\overline{u}(x) = \frac{M}{\lambda}$ and $\underline{u}(x) =\frac{-M}{\lambda}$ are respectively super- and subsolution of \eqref{eq:HJB-master}, where the constant $M$ is defined by $M =  \sup_{\A,\B}\sup_{ x\in \Rn}  |f^{\A,\B}(x)|.  $ This means, by comparison principle, any viscosity solution has to be bounded. Now define
   $$ v(x) = \sup \{ w(x) : w\leq \overline{u} , w \ \mbox{is a subsolution of} \ \eqref{eq:HJB-master}\}. $$
   Furthermore, let $v^*$ and $v_*$ denote the upper and lower semicontinuous envelopes of $v$, defined by 
 $$ v^*(x) = \displaystyle \lim_{r\rightarrow 0 } \sup \{ v(y) : y\in B_r(x) \}\,~ \mbox{and} \ 
 v_*(x) = - (-v)^*(x) . $$ 
    Note that \begin{align*}
v^*(x) & = \displaystyle \lim_{r\downarrow 0 } \sup \{ v(y) : y\in B_r(x) \} 
 \geq \displaystyle \lim_{r\downarrow 0 } \inf \{ v(y) : y\in B_r(x) \} \\
& = - \displaystyle \lim_{r\downarrow 0 } \sup \{ - v(y) : y\in B_r(x) \} 
 = -(-v)^*(x) = v_*(x).
\end{align*} Moreover, it is trivially seen that $\underline{u}= (\underline{u})_* \leq v_* \leq v^* \leq (\overline{u})^* = \overline{u}. $ We now claim that $v^*$ is a subsolution of \eqref{eq:HJB-master}, a justification could be given as follows. Note that for every $x\in \Rn$, there is a sequence $\{v_m\}$ of subsolutions of \eqref{eq:HJB-master} such that $\lim_{m\rightarrow\infty } v_m(y_m) = v^*(x)$, where $\{y_m\}$ is a sequence converging to $x$.

  For a test function $\phi \in C^2_b(\Rn)$ , if $v^* - \phi$ has a strict global maximum at $x$. Therefore there is a sequence
 $\{x_m\}$ such that $v_m - \phi$ have maximum at $x_m$ and 
 $$\displaystyle \lim_{m\rightarrow \infty} (x_m, v_m(x_m)) = (x, v^*(x)).  $$
By definition of viscosity subsolution we get, for every $ m \in \mathbb{N}$ 
\begin{align*} &\sup_{\A \in \SA} \inf_{\B \in \SB} \{ f^{\A , \B} (x_m) + c^{\A , \B} (x_m) v_m(x_m) + 
b^{\A , \B} (x_m)\cdot \grad \phi(x_m) \\&\hspace{6cm}+ a^{\A , \B} (x_m) \laplas \phi(x_m) \} \leq 0.
\end{align*}
By letting $m \rightarrow \infty$,
$$ \displaystyle \sup_{\A \in \SA} \inf_{\B \in \SB} \{ f^{\A , \B} (x) + c^{\A , \B} (x) v^*(x) + 
b^{\A , \B} (x)\cdot\grad \phi(x) + a^{\A , \B} (x) \laplas \phi(x) \} \leq 0. $$
Hence, $v^*$ is a subsolution of \eqref{eq:HJB-master}. 

Next, we claim that $v_*$ is a supersolution of \eqref{eq:HJB-master} and the proof is done by contradiction. 
Assume for the moment that $v_*$ is not a supersolution. Then there exist $y \in \Rn$ and $\phi \in C^2_b(\Rn)$ such that 
$v_*(y) =\phi(y)$ and $v_* -\phi $ has global minimum at $y$, but
      \begin{align} \label{exist:1} 
\displaystyle \sup_{\A \in \SA} \inf_{\B \in \SB} \{ f^{\A , \B} (y) + c^{\A , \B} (y) \phi(y) + 
b^{\A , \B} (y)\cdot\grad \phi(y) + a^{\A , \B} (y) \laplas \phi(y) \} < 0. 
\end{align}
  Note that $v_* \leq \overline{u}$,  but at the point $y$ we claim that  $v_*(y) < \overline{u}(y)$. Otherwise, $\phi(y)= v_*(y) = \overline{u}(y)$ and therefore  $\overline{u}- \phi $ has global minimum at $y$. This means 
$$ \displaystyle \sup_{\A \in \SA} \inf_{\B \in \SB} \{ f^{\A , \B} (y) + c^{\A , \B} (y) \phi(y) + 
b^{\A , \B} (y)\cdot\grad \phi(y) + a^{\A , \B} (y) \laplas \phi(y) \} \geq 0;$$
contradicting  \eqref{exist:1}.

 Now by continuity of $\phi$ and $\overline{u}$, there 
exist $\gamma_1 >0$ and $\delta_1>0$ such that 
$$\phi + \gamma_1 \leq \overline{u} \quad \mbox{in} ~B_{\delta_1} (y).$$

Moreover, form \eqref{exist:1} we see that there exist $\gamma_2>0$ and $\delta_2>0$ such that 
\begin{align} \label{exist:2}
 \displaystyle \sup_{\A \in \SA} \inf_{\B \in \SB} \{ f^{\A , \B} (x) + c^{\A , \B} (x) (\phi(x)+\gamma) + 
b^{\A , \B} (x)\cdot\grad \phi(x) + a^{\A , \B} (x) \laplas \phi(x) \} \leq 0
\end{align} whenever $\gamma\leq 
\gamma_2$ and $ x \in B_{\delta_2}(y) $.

In addition, as $v_* - \phi$ has strict minimum at $y$, there exist $ \gamma_3>0$ and $0< \delta \leq \min \{\delta_1
,\delta_2\}$ such that $v_* - \phi >\gamma_3 \quad \mbox{on} \ \ \partial B_{\delta}(y).$ Now set $\gamma = \min   
\{\gamma_1, \gamma_2, \gamma_3\}$ and define 
\begin{equation*}
    w = \begin{cases}
               \max\{\phi+\gamma;v^* \}               & \text{on}~ \ B_{\delta}(y)\\
               v^*               & \mbox{otherwise}. 
           \end{cases}
\end{equation*}
   We will show that $w$ is a viscosity subsolution of \eqref{eq:HJB-master}. Let $x\in \Rn$ and $\psi \in C_b^2(\Rn)	$ be a test 
function such that $\psi(x) = w(x)$ and $w - \psi$ has maximum at $x$. Depending on whether $w=v^*$ or $w= \phi + \gamma $ at $x$, either $v^* - \psi$ or $\phi+\gamma -\psi $ has 
global maximum at $x$.  If $w= v^*$ at $x$ then as $v^*$ is a viscosity subsolution of \eqref{eq:HJB-master}, it follows 
$$ \displaystyle \sup_{\A \in \SA} \inf_{\B \in \SB} \{ f^{\A , \B} (x) + c^{\A , \B} (x) \psi(x) + 
b^{\A , \B} (x)\cdot\grad \psi(x) + a^{\A , \B} (x) \laplas \psi(x) \} \leq 0.$$ 
   
   Otherwise, $\grad \phi(x) = \grad \psi(x) $ and $\laplas \phi(x) \geq \laplas \psi(x). $ Hence form \eqref{exist:2}
we get
$$ \displaystyle \sup_{\A \in \SA} \inf_{\B \in \SB} \{ f^{\A , \B} (x) + c^{\A , \B} (x) \psi(x) + 
b^{\A , \B} (x)\cdot\grad \psi(x) + a^{\A , \B} (x) \laplas \psi(x) \} \leq 0.$$ 
Hence $w$ is a viscosity subsolution of \eqref{eq:HJB-master}. Finally,  $w_*(y) \geq (\phi+\gamma)_*(y) = \phi(y) + \gamma = v_*(y) + \gamma $. This means that  there is $ z\in B_{\delta}(y)$ such that $w(z) > v(z)$ . 
It contradicts the definition of $v$. Hence $v_*$ is a viscosity supersolution of \eqref{eq:HJB-master}.
 
Then by Theorem \ref{comparison} we get $ v^* \leq v_*$; it follows $v^*= v_* = v$ and hence $v$ is a viscosity 
solution of the equation \eqref{eq:HJB-master}.

   \end{proof}

We now prove Theorem \ref{thm:lipschitz}, and this is done by using the method of doubling the variables. We begin the following observation. 

\begin{lem} \label{lem.lip1}
Let $u, -v \in USC_b(\Rn)$ be respectively a viscosity subsolution and a supersolution of \eqref{eq:HJB-master}. Suppose $\phi \in C^2(\Rn\times \Rn)$ and $(x_0, y_0) \in \Rn\times \Rn$ be such that $u(x) -v(y) -\phi(x,y)$ has a global maximum at $(x_0,y_0)$. Then, for all $\delta \in (0,1)$, 
\begin{align*}
& \displaystyle \sup_{\A \in \SA} \inf_{\beta \in \SB}\big\{ f^{\A , \beta} (x_0) + c^{\A , \beta} (x_0) u(x_0) + 
b^{\A , \beta} (x_0)\cdot D_x \phi(x_0,y_0)\\&\hspace{6cm} + a^{\A , \beta} (x_0) [\laplask \phi_1(x_0) + \laplasK u(x_0)] \big\} \leq 0\\
\mbox{and } & \displaystyle \sup_{\A \in \SA} \inf_{\beta \in \SB}\big\{ f^{\A , \beta} (y_0) + c^{\A , \beta} (y_0) v(y_0) + 
b^{\A , \beta} (y_0)\cdot(-D_y \phi(x_0,y_0))\\&\hspace{5cm} + a^{\A , \beta} (y_0) [\laplask \phi_2(y_0) + \laplasK v(y_0)]\big\} \geq 0,
\end{align*} where $\phi_1(\cdot) = \phi(\cdot, y_0)$ and $\phi_2(\cdot)=- \phi(x_0,\cdot)$. 
\end{lem}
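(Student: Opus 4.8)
The plan is to use the standard viscosity-solution machinery of splitting the nonlocal operator at scale $\delta$ together with the fact that a global maximum of $u(x)-v(y)-\phi(x,y)$ yields information at both $x_0$ and $y_0$ simultaneously. First I would fix $\delta\in(0,1)$ and work with the decomposition $(-\Delta)^{1/2}=\laplask+\laplasK$ which is valid for test functions by the remark following Definition \ref{defi:viscosolution}. For the subsolution inequality, I would freeze the second slot: since $u(x)-v(y)-\phi(x,y)$ has a global maximum at $(x_0,y_0)$, the function $x\mapsto u(x)-\phi(x,y_0) = u(x)-\phi_1(x)$ has a global maximum at $x_0$ (up to the additive constant $v(y_0)$). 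One then wants to feed the test function $\phi_1\in C^2(\Rn)$ into the subsolution inequality for $u$; but $\phi_1$ need not be bounded, so I would either invoke the version of Definition \ref{defi:viscosolution} that only requires $\varphi\in C^2(\Rn)$ with a \emph{global} maximum (the first definition, not the $C^2_b$ alternative), or truncate $\phi_1$ far from $x_0$ without affecting either the maximum property or the value of $\laplask\phi_1(x_0)$, which only sees a neighbourhood of $x_0$. This produces
\begin{align*}
\sup_{\A\in\SA}\inf_{\beta\in\SB}\big\{f^{\A,\beta}(x_0)+c^{\A,\beta}(x_0)u(x_0)+b^{\A,\beta}(x_0)\cdot D_x\phi(x_0,y_0)+a^{\A,\beta}(x_0)[\laplask\phi_1(x_0)+\laplasK u(x_0)]\big\}\le 0,
\end{align*}
where I have used $\nabla\phi_1(x_0)=D_x\phi(x_0,y_0)$ and the defining identity $(-\Delta)^{1/2}\phi_1(x_0)=\laplask\phi_1(x_0)+\laplasK u(x_0)$ — the tail piece is evaluated on $u$ rather than $\phi_1$ precisely because $u-\phi_1$ attains its global maximum at $x_0$, so $u(x_0+z)-u(x_0)\le\phi_1(x_0+z)-\phi_1(x_0)$ for all $z$ and the substitution is legitimate in the subsolution direction.

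Symmetrically, for the supersolution inequality I would freeze the first slot: the function $y\mapsto -v(y)-\phi(x_0,y)$ has a global maximum at $y_0$, equivalently $y\mapsto v(y)-(-\phi(x_0,y)) = v(y)-\phi_2(y)$ has a global minimum at $y_0$. Plugging $\phi_2\in C^2(\Rn)$ into the supersolution inequality for $v$ (again with the same caveat about global versus bounded test functions, handled by the same truncation), and using $\nabla\phi_2(y_0)=-D_y\phi(x_0,y_0)$ together with the reversed inequality $v(y_0+z)-v(y_0)\ge\phi_2(y_0+z)-\phi_2(y_0)$ to justify replacing the tail of $(-\Delta)^{1/2}\phi_2(y_0)$ by $\laplasK v(y_0)$ in the supersolution direction, yields
\begin{align*}
\sup_{\A\in\SA}\inf_{\beta\in\SB}\big\{f^{\A,\beta}(y_0)+c^{\A,\beta}(y_0)v(y_0)+b^{\A,\beta}(y_0)\cdot(-D_y\phi(x_0,y_0))+a^{\A,\beta}(y_0)[\laplask\phi_2(y_0)+\laplasK v(y_0)]\big\}\ge 0.
\end{align*}

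The two halves are genuinely independent once the maximum/minimum points are identified, so there is no coupling obstacle here; the statement is essentially a bookkeeping lemma recording the ``local + nonlocal tail'' split in the doubling-of-variables setup. The only point that needs care — and which I would expect to be the main technical nuisance rather than a real obstacle — is the interplay between the sign convention in $\laplasK$ (which carries the factor $-c(n,1/2)$) and the direction of the inequalities $u(x_0+z)-u(x_0)\le\phi_1(x_0+z)-\phi_1(x_0)$ versus $v(y_0+z)-v(y_0)\ge\phi_2(y_0+z)-\phi_2(y_0)$: one must check that the substitution of $u$ for $\phi_1$ in the tail \emph{increases} the value of $\laplasK(\cdot)(x_0)$ (consistent with the $\le 0$ subsolution inequality) and that the substitution of $v$ for $\phi_2$ in the tail \emph{decreases} $\laplasK(\cdot)(y_0)$ (consistent with the $\ge 0$ supersolution inequality). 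Both signs work out because the integrand in $\laplasK$ is $-(\cdot)/|z|^{n+1}$, so pointwise domination of the bracketed difference flips to the correct direction after multiplication by the negative kernel. Finally, since the estimates hold for every $\delta\in(0,1)$ with the split performed at scale $\delta$, the conclusion holds for all $\delta\in(0,1)$ as stated, completing the proof.
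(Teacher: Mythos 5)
Your proof is substantively correct and takes the same (short) route the paper has in mind: freeze one slot of the doubled variable, observe that $x\mapsto u(x)-\phi_1(x)$ attains a global maximum at $x_0$ and $y\mapsto v(y)-\phi_2(y)$ attains a global minimum at $y_0$, and feed $\phi_1,\phi_2\in C^2(\Rn)$ into Definition~\ref{defi:viscosolution} with $\kappa=\delta$. The two required inequalities are read off directly, the computations $\nabla\phi_1(x_0)=D_x\phi(x_0,y_0)$ and $\nabla\phi_2(y_0)=-D_y\phi(x_0,y_0)$ are correct, and your observation that Definition~\ref{defi:viscosolution} only needs $\varphi\in C^2(\Rn)$ (rather than $C^2_b$) is exactly the point that makes the unbounded test function $\phi_1$ admissible without any truncation. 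This matches the paper's (tacit) argument.

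One caution about a side remark that you should delete, since as written it is wrong and could mislead if taken seriously. You call $(-\Delta)^{1/2}\phi_1(x_0)=\laplask\phi_1(x_0)+\laplasK u(x_0)$ a ``defining identity'' and speak of a ``legitimate substitution'' of $u$ for $\phi_1$ in the tail. This is not an identity: since $u-\phi_1$ is maximized at $x_0$ one has $u(x_0+z)-u(x_0)\le\phi_1(x_0+z)-\phi_1(x_0)$, and after multiplying by the negative kernel $-c(n,1/2)|z|^{-n-1}$ and integrating this gives $\laplasK u(x_0)\ge\laplasK\phi_1(x_0)$ (as you correctly note). But because $a^{\A,\B}>0$, replacing $\laplasK\phi_1$ by the larger quantity $\laplasK u$ can only \emph{increase} the supinf, so the inequality goes the \emph{wrong} way for deriving the split form from the $C^2_b$-alternative Definition~\ref{defi:viscosolution-alt} by a one-line substitution; that direction of equivalence between the two definitions needs an approximation argument, not monotonicity. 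Fortunately this does not matter for your proof: Definition~\ref{defi:viscosolution} already states the subsolution inequality with the tail $\laplasK u$ built in, so no substitution is performed at all — you simply read the conclusion off that definition. Keep the invocation of Definition~\ref{defi:viscosolution}, drop the ``substitution'' rationale.
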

 The proof is trivial and follows directly from the definition of viscosity solution.

\begin{rem}
      In view of Lemma \ref{lem:reduction-1}, the strict ellipticity assumption in \ref{A4} permits us
      to rewrite the equation \eqref{eq:HJB-master} into a form which is similar but additionally 
      has $a^{\A, \B}(x) \equiv 1$. Therefore, while proving Lipschitz continuity of the viscosity solution,
      it is enough to assume $a^{\A, \B}(x) \equiv 1$. This result could be appropriately translated for \eqref{eq:HJB-master} if \ref{A4} holds. However, it is important to mention here that the Lipschitz continuity holds even for `degenerate' equations where $a^{\A,\B}(x)$ may vanish at certain points i.e \ref{A4} fails to hold. In such a scenario, our proposed transformation does not work. 
\end{rem}

\begin{proof}[Proof of Theorem \ref{thm:lipschitz}.] For positive constants $\gamma$ and $\varepsilon$, define 
\begin{align*}
 &\phi(x,y) = \dfrac{\gamma}{2} |x-y|^2 + \dfrac{\varepsilon}{2} (|x|^2+|y|^2)\quad
  \mbox{and} \quad \psi(x,y) = u(x) -u(y) -\phi(x,y) .
\end{align*} In view of the above remark, from now onward, we will simply assume that $a^{\A, \B}\equiv 1$. It is trivially seen that the function $\psi(x,y)$ is bounded above and there is $(x_0, y_0)\in \Rn\times \Rn$ such that $M_\varepsilon := \sup_{x,y \in \Rn} \psi(x,y) = \psi(x_0, y_0)$.  It now follows by Lemma \ref{lem.lip1} that 
\begin{align*}
&\displaystyle \sup_{\A \in \SA} \inf_{\B \in \SB} \{ f^{\A , \B} (x_0) + c^{\A , \B} (x_0) u(x_0) + 
b^{\A , \B} (x_0)\cdot D_x \phi(x_0,y_0) +  \laplask \phi_1(x_0) + \laplasK u(x_0) \}\\
&- \displaystyle \sup_{\A \in \SA} \inf_{\B \in \SB} \{ f^{\A , \B} (y_0) + c^{\A , \B} (y_0) u(y_0) + 
b^{\A , \B} (y_0)\cdot (-D_y \phi(x_0,y_0)) \\&\hspace{8cm}+ \laplask \phi_2(y_0) + \laplasK u(y_0) \} \leq 0 .
\end{align*}

Note that $\psi(0,0) = 0$, therefore $M_\varepsilon \ge 0$ and this implies that $u(x_0)-u(y_0) \ge 0$. Therefore
\begin{align} \label{lip.cont0}
\notag &\lambda (u(x_0) - u(y_0) ) \\ \leq  \notag &
\displaystyle \sup_{\A \in \SA} \inf_{\B \in \SB} \{ f^{\A , \B} (y_0) + c^{\A , \B} (y_0) u(y_0) + 
b^{\A , \B} (y_0)\cdot (-D_y \phi(x_0,y_0)) \\\notag & \hspace{7.5cm}+ [\laplask \phi_2(y_0) + \laplasK u(y_0)] \} \\
\notag& \hspace{.3cm}- \displaystyle \sup_{\A \in \SA} \inf_{\B \in \SB} \{ f^{\A , \B} (x_0) + c^{\A , \B} (x_0) u(y_0) + 
b^{\A , \B} (x_0)\cdot D_x \phi(x_0,y_0)\\&\hspace{7.5cm} +[\laplask \phi_1(x_0) + \laplasK u(x_0)] \},
\end{align} where $\lambda$ is the constant in \ref{A3}. We now need to find a suitable upper bound for the RHS of \eqref{lip.cont0}. Thanks to \ref{A1}-\ref{A2}, it follows from straightforward computation that there exists constants $K_0, K_1, B$ depending on \ref{A1}-\ref{A2} and $||u||_{L^\infty}$ such that 
\begin{align} \label{lip.cont1}
\notag & f^{\A , \B} (y_0) + c^{\A , \B} (y_0) u(y_0) + 
b^{\A , \B} (y_0)\cdot (-D_y \phi(x_0,y_0))\\ \notag
& \hspace*{3cm}-[f^{\A , \B} (x_0) + c^{\A , \B} (x_0) u(y_0) + 
b^{\A , \B} (x_0)\cdot D_x \phi(x_0,y_0)] \\ 
& \leq K |x_0 -y_0| + K_1 \gamma |x_0 - y_0|^2 + B \varepsilon (1+ |x_0|^2 + |y_0|^2) .
\end{align} for all $(\A, \B)$.

We now estimate the nonlocal terms, keeping in mind the simplifying assumption that we have made at the beginning of the proof. Note that 
\begin{align*}
& \laplask \phi_2(\yo) - \laplasK \phi_1(\xo)\\ =~& c(n,1/2) \int_{B_{\delta}} \dfrac{1}{|z|^{n+1}}\Big[\phi(\xo,\yo+z) 
     -2\phi(\xo,\yo) + \phi(\xo +z,\yo)\\ &\hspace{4cm}+ z\cdot(\gamma(\xo-\yo)-\varepsilon \yo)  - z\cdot(\gamma(\xo-\yo) +\varepsilon \xo) \Big]\, dz \\
     =&  c(n,1/2) \int_{B_{\delta}} \frac{(\gamma +\varepsilon)}{|z|^{n-1}}\,dz \\
     =& c(n,1/2) \omega(n) (\varepsilon+\gamma)\delta,
     \end{align*} where $\omega(n)$ is the surface area of $S^{n-1}$. Note that $\mathcal{I}^{\delta}u$ could be written as $\mathcal{I}^{\delta}u(x):= \mathcal{I}^{\delta}_1u(x)+\mathcal{I}^{\delta}_1u(x)$, where 
      \begin{align*}
     \mathcal{I}_1^{\delta}u(x) =c(n,1/2)  \int_{\delta <|z|<1 } \dfrac{u(x) - u(x +z)}
{|z|^{n+1}} \ dz, \\ \mathcal{I}_2^{\delta}u(x)= c(n,1/2)\int_{|z|>1} \dfrac{u(x)  -u(x+z)}{|z|^{n+1}} \,dz. 
\end{align*}
Moreover, for any $\ell \in \Rn$, it holds that $$  \mathcal{I}_1^{\delta}u(x) = c(n,1/2) \int_{\delta <|z|<1 } \dfrac{u(x) - u(x +z) -\ell\cdot z}
{|z|^{n+1}} dz.$$  

With this notation, we now have  \\$ \laplasK u(\yo) - \laplasK u(\xo) =  \laplasK_{1} u(\yo) - \laplasK_1 u(\xo) + \laplasK_2 u(\yo) - \laplasK_2 u(\xo) $ and 
\begin{align*}
&  \laplasK_{1} u(\yo) - \laplasK_{1} u(\xo)\\ = &~c(n,1/2)\int_{\delta<|z|<1}\dfrac{ u(\yo)-u(\xo) -u(\yo+z) +u(\xo+z) - \varepsilon(\xo+\yo)\cdot z}{|z|^{n+1}}\,dz\\
  =&~ c(n,1/2) \int_{\delta<|z|<1}\dfrac{1}{|z|^{n+1}}\Big[\psi(\xo+z,\yo+z) - \psi(\xo,\yo) - \varepsilon (\xo+\yo)\cdot z \\&\hspace{6cm}+ \phi(\xo +z,\yo+z) - \phi(\xo,\yo)\Big]\,dz\\
  \le &~ c(n,1/2)  \int_{\delta<|z|<1}\frac{ \phi(\xo +z,\yo+z) - \phi(\xo,\yo)- \varepsilon (\xo+\yo)\cdot z}{|z|^{n+1}}\,dz\\
  = &~ c(n,1/2)   \int_{\delta<|z|<1}\frac{\varepsilon |z|^2}{|z|^{n+1}}\,dz = \varepsilon \omega(n)(1-\delta)c(n,1/2).
\end{align*}
Moreover, note that
\begin{align*}
 \laplasK_2 u(\yo)- \laplasK_2 u(\xo)
=& c(n,1/2) \int_{|z|>1} \dfrac{u(\xo+z) -u(\yo+z) - (u(\xo) - u(\yo))}{|z|^{n+1}} \ dz.
\end{align*} It is also fairly straightforward to conclude that 
\begin{align*} 
\lim_{\varepsilon \rightarrow 0}M_{\varepsilon} = \displaystyle \sup_{x,y \in \Rn} \{u(x) -u(y) -
\dfrac{\gamma}{2}|x-y|^2 \}:= M
\end{align*} Therefore for every $\gamma \in (0,\infty )$, there is $\varepsilon_\gamma > 0$ such that 
\begin{align}\label{esti:3}
 0< \displaystyle \sup_{x,y \in \Rn} \{u(x) -u(y) -\dfrac{\gamma}{2}|x-y|^2 \} - M_{\varepsilon} < \frac{1}{\gamma} ,
\end{align} for all $\varepsilon < \varepsilon_\gamma$. We now use the fact that $ u(\xo) -u(\yo) -\frac{\gamma}{2} |\xo-\yo|^2 \geq  M_{\varepsilon}$ and conclude
\begin{align*}
& u(\xo+z)-u(\yo+z) -(u(\xo)-u(\yo))\\ \leq & u(\xo+z) -u(\yo+z)-\frac{\gamma}{2}|\xo-\yo|^2-M_{\varepsilon} \leq \frac{1}{\gamma},
\end{align*} whenever $\varepsilon < \varepsilon_\gamma$. 

Therefore,  for every $\delta \in (0,1)$ and $\gamma\in (0,\infty)$, we have

\begin{align}
\label{eq:nonlocal-term} \laplask \phi_2(\yo) -\laplask \phi_1(\xo) - \laplasK u(\xo)+\laplasK u(\yo) 
\leq C \left(\varepsilon + \delta\gamma+\delta + \frac{1}{\gamma} \right)
\end{align} whenever $\varepsilon$ is small enough.  We now combine \eqref{lip.cont0},\eqref{lip.cont1} and \eqref{eq:nonlocal-term}, and substitute $\varepsilon =\delta $ to obtain

\begin{align}
\label{eq:nonlocal-estimate}\lambda M_{\varepsilon}  \leq K|x_0-y_0| - \dfrac{\gamma}{2}(\lambda - 2K_1) |x_0 -y_0|^2 + B \varepsilon(\gamma + 1+ |x_0|^2 + |y_0|^2 ) + C \frac{1}{\gamma}.
\end{align} Now we assume $\lambda_0 > 2K_1$ and \eqref{eq:nonlocal-estimate} implies

\begin{align}
\notag \lambda M_{\varepsilon}  \leq & \sup_{r\in(0,\infty)} \Big[K r - \dfrac{\gamma}{2}(\lambda - 2K_1) r^2\Big] + B \varepsilon(\gamma + 1+ |x_0|^2 + |y_0|^2 ) + C \frac{1}{\gamma}\\
      = & \frac{K^2}{2\gamma(\lambda-2K_1)}+B \varepsilon(\gamma + 1+ |x_0|^2 + |y_0|^2 ) + C \frac{1}{\gamma} \label{eq:nonlocal-estimate-1}
\end{align}

Now letting $\varepsilon\rightarrow 0$ in \eqref{eq:nonlocal-estimate-1} and noticing that $\lim_{\varepsilon \rightarrow 0}\varepsilon(|x_0|^2+|y_0|^2) =0$, we obtain 
\begin{align}\label{lip:one}
 \lambda \displaystyle \sup_{x,y\in \Rn} \{ u(x)-u(y) -\frac{\gamma}{2} |x-y|^2 \} 
\leq \frac{K^2}{2\gamma (\lambda - 2K_1)} +C \frac{1}{\gamma}
\end{align} for all $\gamma >0$. Therefore, for all $x, y\in \R^n$,

$$ |u(x) -u(y)| \leq \displaystyle \inf_{\gamma>0} \left( \frac{K^2}{2\gamma \lambda (\lambda - 2K_1)} 
+  \frac{C}{\lambda\gamma}  + \frac{\gamma}{2}|x-y|^2 \right)= \tilde{K} |x-y| ,$$
which completes the proof.

\end{proof}

\medskip
Received xxxx 20xx; revised xxxx 20xx.
\medskip

\end{document}